\tikzset{arrow/.style={-stealth}}
\tikzset{arrowshorter/.style={-stealth, shorten <=2pt, shorten >=2pt}}
\tikzset{arrowmuchshorter/.style={-stealth, shorten <=7pt, shorten >=6pt}}
\tikzset{dot/.style={circle,draw,fill,inner sep=1pt}}
\tikzset{mono/.style={>-stealth}} 
\tikzset{epi/.style={-twotriang}} 
\tikzset{twoarrowlonger/.style={double,double distance=1.5pt,
		shorten <=5pt,shorten >=6pt,
		decoration={markings,mark=at position -4pt with {\arrow[scale=1.75]{>}}},
		preaction={decorate}}}
\tikzset{vnode/.style={circle, radius=2pt, minimum size=4pt, draw, fill, inner sep=0, label={[below,text height=5mm]:#1}}}
\tikzset{boxy/.style={baseline={([yshift=0.5ex]current bounding box.center)}}}
\tikzset{vellipsefirstone/.style={draw, ellipse, minimum width=0.6*#1, minimum height=1.5*#1, blue, thick}}
\tikzset{vellipsesecondone/.style={draw, ellipse, minimum width=0.6*#1, minimum height=1.5*#1, orange, thick}}
\tikzset{vellipsethirdone/.style={draw, ellipse, minimum width=0.6*#1, minimum height=1.5*#1, green!40!black!60, thick}}
\tikzset{vellipsefirsttwo/.style={draw, ellipse, minimum width=2*#1, minimum height=1.5*#1, blue, thick}}
\tikzset{vellipsesecondtwo/.style={draw, ellipse, minimum width=2*#1, minimum height=1.5*#1, orange, thick}}
\tikzset{vellipsefirstthree/.style={draw, ellipse, minimum width=3.5*#1, minimum height=1.5*#1, blue, thick}}
\theoremstyle{plain}
\newtheorem{theorem}[equation]{Theorem}
\newtheorem{prop}[equation]{Proposition}
\theoremstyle{definition}
\newtheorem{definition}[equation]{Definition}
\newtheorem{example}[equation]{Example}
\numberwithin{equation}{section}
\newcommand{\Cat}{\mathcal Cat}
\newcommand{\Deltaop}{\Delta^{\op}}
\newcommand{\Hom}{\operatorname{Hom}}
\newcommand{\Hor}{\mathcal Hor}
\newcommand{\hor}{\operatorname{hor}}
\newcommand{\id}{\operatorname{id}}
\newcommand{\mor}{\operatorname{mor}}
\newcommand{\nerve}{\operatorname{nerve}}
\newcommand{\ob}{\operatorname{ob}}
\newcommand{\op}{\operatorname{op}}
\newcommand{\sdc}{\mathcal{SDC}}
\newcommand{\Seg}{\mathcal Seg}
\newcommand{\Set}{\mathcal Set}
\newcommand{\Sq}{\mathcal Sq}
\newcommand{\sq}{\operatorname{sq}}
\newcommand{\Ver}{\mathcal Ver}
\newcommand{\ver}{\operatorname{ver}}
\begin{document}
	
\title{Combinatorial examples and applications of 2-Segal sets}
	
\author[J.E. Bergner]{Julia E. Bergner}
	
\address{Department of Mathematics, University of Virginia, Charlottesville, VA 22904}
	
\email{jeb2md@virginia.edu}
	
\date{\today}

\begin{abstract}
	We give an introduction to the theory of 2-Segal sets, and two of the main applications of them: Hall algebras and a discrete version of Waldhausen's $S_\bullet$-construction.  We present several combinatorial examples and how these constructions can be applied to them.
\end{abstract}
	
\maketitle

\section{Introduction}

The notion of a 2-Segal space was introduced by Dyckerhoff and Kapranov \cite{dk} and independently by G\'alvez-Carrillo, Kock, and Tonks under the name of decomposition space \cite{gckt}.  Its definition generalizes that of a Segal space, and encodes an algebraic structure like that of a category, but for which composition need not exist or be unique, but yet still satisfies associativity.  A good deal of the motivation for the definition arises from the desire to unify and generalize constructions such as Hall algebras and incidence coalgebras, and they have been shown to have a close relationship with algebraic $K$-theory.  Since their introduction, they have been shown to appear in a wide range of applications, from symplectic geometry to group theory.

Much of the literature in the subject is written in a homotopical context, allowing for robust examples and the use of sophisticated tools.  However, the limitation of this generality is that it can be intimidating to a newcomer to the subject, especially one less fluent in the language of modern homotopy theory and higher category theory.  In this paper, we give an introduction in the discrete setting, namely that of 2-Segal sets, focusing on concrete combinatorial examples and how we can apply some of the main constructions in the subject to them.

We begin with a review of simplicial sets and categories, and then use the notion of 1-Segal set to generalize to the definition of 2-Segal set.  We then develop two families of examples: the first a notion of 2-Segal sets associated to graphs, and the second a similar construction for trees.  Finally, we describe two of the main constructions for 2-Segal sets: the Hall algebra and a discrete version of the Waldhausen $S_\bullet$-construction.

For further expository material on 2-Segal spaces, we refer the reader to \cite{hackney}, and for the connection with Hall algebras to \cite{cy}.

These notes are an expansion of a talk at the ECOGyT conference in Bogot\'a in 2024.  We would like to thank the organizers of that conference both for the invitation to speak, and for the opportunity to develop this exposition.  Many of the diagrams in this paper are taken from the papers \cite{waldsets}, \cite{trees}, and \cite{simpsets}; we would like to thank the coauthors on the first two papers for creating them, and Walker Stern for his assistance on the ones taken from the third listed paper.

\section{Simplicial sets, categories, and Segal sets}

Although we are keeping things more simple in this paper by avoiding more complicated simplicial objects, it is necessary to work in the context of simplicial sets.  Here, we give a brief review of them, and refer the reader to \cite{simpsets} for a more detailed and conceptual development of the theory; see also \cite{friedman}.

The category $\Delta$ has objects the finite ordered sets $[n]=\{0 \leq 1 \leq \cdots \leq n\}$ and morphisms the order-preserving functions.  We can depict this category as follows:
\[ \xymatrix@1{[0] \ar@<.5ex>[r] \ar[r] & [1] \ar@<.5ex>[l] \ar[r] \ar@<.5ex>[r] \ar@<1ex>[r] & [2] \ar@<.5ex>[l] \ar@<1ex>[l] \ar[r] \ar@<.5ex>[r] \ar@<1ex>[r] \ar@<1.5ex>[r] & \cdots. \ar@<.5ex>[l] \ar@<1ex>[l] \ar@<1.5ex>[l]} \] 

For each $0 \leq i \leq n$, we denote by $d^i \colon [n] \rightarrow [n+1]$ the \emph{coface map} given by 
\[ d^i(j) = \begin{cases}
	j & j< i \\
	j+1 & j \geq i,
	\end{cases} \]
and by $s^i \colon [n] \rightarrow [n-1]$ the \emph{codegeneracy map} given by 
\[ s^i(j) = \begin{cases}
	j & j \leq i \\
	j-1 & j >i.
	\end{cases} \]
Observe that these maps, subject to various cosimplicial relations, generate all the morphisms in $\Delta$.

Given a small category $\mathcal C$, recall that its \emph{opposite category} $\mathcal C^{\op}$ is the category with the same objects as $\mathcal C$ but with the direction of its morphisms reversed, so that for any objects $a$ and $b$ of $\mathcal C$, we have $\Hom_{\mathcal C^{\op}}(a,b) = \Hom_\mathcal C(b,a)$.

We can now define simplicial sets.

\begin{definition}
	A \emph{simplicial set} is a functor $K \colon \Deltaop \rightarrow \Set$, where $\Set$ denotes the category of sets and functions.
\end{definition}

We can thus depict a simplicial set $K$ as a diagram
\[ \xymatrix@1{K_0 \ar[r] & K_1 \ar@<.8ex>[l] \ar@<-.8ex>[l]  \ar@<-.5ex>[r] \ar@<.5ex>[r] & K_2 \cdots. \ar[l] \ar@<1ex>[l] \ar@<-1ex>[l]} \]
Now, the coface maps in $\Delta$ induce the \emph{face maps} $d_i \colon K_n \rightarrow K_{n-1}$, and the codegeneracy maps induce the \emph{degeneracy maps} $s_i \colon K_n \rightarrow K_{n+1}$.

Given a simplicial set $K$, and an object $[n]$ of $\Delta$, we usually denote the set $K([n])$ by $K_n$.   We refer to the objects of this set as $n$-\emph{simplices} of $K$, a name further justified by the following example.

\begin{example}
	For $n \geq 0$, we define the $n$-\emph{simplex} $\Delta[n]$ is the simplicial set defined by
	\[ \Delta[n]_k = \Hom_\Delta([k],[n]). \]
	In other words, $\Delta[n]$ is the representable functor on the object $[n]$.  As a consequence, Yoneda's Lemma tells us that, for any simplicial set $K$, there is a natural isomorphism
	\[ \Hom(\Delta[n], K) \cong K_n. \]
\end{example}

\begin{example}
	We usually define a \emph{monoid} $M$ to be a set equipped with a binary operation that has an identity element and is associative. 
	
	Given a monoid in this sense, we can associate to it a simplicial set:
	\[ \xymatrix@1{\{e\} \ar[r] & M \ar@<.8ex>[l] \ar@<-.8ex>[l]  \ar@<-.5ex>[r] \ar@<.5ex>[r] & M \times M \cdots. \ar[l] \ar@<1ex>[l] \ar@<-1ex>[l]} \]
	
	A more general small category $\mathcal C$ can simarly be thought of a simplicial set via its \emph{nerve}:
	\[ \xymatrix@1{\ob(\mathcal C) \ar[r] & \mor(\mathcal C) \ar@<.8ex>[l] \ar@<-.8ex>[l]  \ar@<-.5ex>[r] \ar@<.5ex>[r] & \mor(\mathcal C) \times_{\ob(\mathcal C)} \mor(\mathcal C) \cdots. \ar[l] \ar@<1ex>[l] \ar@<-1ex>[l]} \]
\end{example}

More formally, we can make the following definition.

\begin{definition}
	Let $\mathcal C$ be a small category.  Its \emph{nerve}, denoted by $\nerve(\mathcal C)$ is the simplicial set defined by
	\[ \nerve(\mathcal C)_n = \Hom_{\Cat}([n], \mathcal C). \]
\end{definition}

\begin{example}
	For any $n \geq 0$, we can think of the ordered set $[n]$ as a category with objects the elements $0, 1, \ldots, n$ and a single morphism $i \rightarrow j$ whenever $i \leq j$.  Then the nerve of $[n]$ is precisely the $n$-simplex $\Delta[n]$.
\end{example}

A natural question is the following: how can we identify the simplicial sets that can be obtained as nerves of categories?  To answer this question, we make use of another example of a simplicial set.

\begin{example}
	For any $n \geq 0$, let $G(n)$ be the simplicial set 
	\[ \Delta[1] \amalg_{\Delta[0]} \cdots \amalg_{\Delta[0]} \Delta[1], \]
	where the pushouts are indicated so that the ``target" of each 1-simplex is the ``source" of the next; in other words, $G(n)$ is the colimit of the diagram
	\[ \Delta[1] \xleftarrow{d^0} \Delta[0] \xrightarrow{d^1} \xleftarrow{d^0} \cdots \xrightarrow{d^1} \Delta[1]. \]
	For example, when $n=2$, we can depict $G(2)$ as the spine
	\[ \begin{tikzpicture}
		\foreach \tet/\lab/\nom/\pos in {-30/A/2/below right,90/B/1/above,210/C/0/below left}{
			\draw[fill=black] (\tet:1.5) circle (0.05);
			\path (\tet:1.5) node[label=\pos:$\nom$] (\lab) {};
		}
		
		\draw[->] (C) to (B);
		\draw[->] (B) to (A);
	\end{tikzpicture} \]
	of the 2-simplex
	\[ \begin{tikzpicture}
		\foreach \tet/\lab/\nom/\pos in {-30/A/2./below right,90/B/1/above,210/C/0/below left}{
			\draw[fill=black] (\tet:1.5) circle (0.05);
			\path (\tet:1.5) node[label=\pos:$\nom$] (\lab) {};
		}
		\path[fill=blue, opacity=0.3] (A.center) -- (B.center) -- (C.center) -- cycle;
		\draw[->] (C) to (B);
		\draw[->] (B) to (A);
		\draw[->] (C) to (A);
	\end{tikzpicture} \]  
\end{example}

Given any simplicial set $K$, the inclusion maps $G(n) \rightarrow \Delta[n]$ induce maps of sets
\[ K_n \cong \Hom(\Delta[n], K) \rightarrow \Hom(G(n),K) \cong \underbrace{K_1 \times_{K_0} \cdots \times_{K_0} K_1}_n. \]
We call these maps the \emph{1-Segal maps}.  The ``1" here is meant to emphasize that the $G(n)$'s are 1-dimensional sub-simplicial sets, in that they have no nondegenerate simplices above dimension 1.  Elsewhere in the literature they are often simply called \emph{Segal maps}.

\begin{definition}
	A \emph{1-Segal set} is a simplicial set $K$ for which the Segal maps are isomorphisms for all $n \geq 2$.
\end{definition}

Note that we omit the cases $n=0,1$ here because those maps are by definition the identity, and therefore automatically isomorphisms. 

Observe that the 1-Segal map for $n=2$ is an isomorphism precisely when any pair of ``composable" 1-simplices has a unique composite, given by the remaining edge in the boundary of the 2-simplex.  In fact, we have the following result.

\begin{prop}
	A simplicial set $K$ is isomorphic to the nerve of a category if and only if it is a 1-Segal set.
\end{prop}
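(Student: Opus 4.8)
The plan is to prove both directions of the equivalence, exhibiting an explicit correspondence between categories and 1-Segal sets.

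First I would establish the forward direction: if $K = \nerve(\mathcal C)$ for some small category $\mathcal C$, then $K$ is a 1-Segal set. By definition, $\nerve(\mathcal C)_n = \Hom_{\Cat}([n], \mathcal C)$, and a functor $[n] \to \mathcal C$ is precisely the data of a string of $n$ composable morphisms $c_0 \to c_1 \to \cdots \to c_n$ in $\mathcal C$. The 1-Segal map sends such a string to the tuple of its constituent arrows $(c_0 \to c_1, c_1 \to c_2, \ldots, c_{n-1} \to c_n)$ in $K_1 \times_{K_0} \cdots \times_{K_0} K_1$, where the fiber products enforce that the target of each arrow matches the source of the next. Conversely, any such compatible tuple of $n$ arrows assembles uniquely into a string, so this map is a bijection; this is essentially just unwinding the definition of a functor out of $[n]$.

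Next I would establish the converse: given a 1-Segal set $K$, I would construct a category $\mathcal C$ whose nerve recovers $K$. Set $\ob(\mathcal C) = K_0$ and, for the morphisms, let $\Hom_\mathcal C(a,b)$ be the set of $1$-simplices $f \in K_1$ with $d_1 f = a$ and $d_0 f = b$. Identities come from the degeneracy $s_0 \colon K_0 \to K_1$, and composition is defined using the $n=2$ Segal isomorphism: given composable $f, g \in K_1$, the pair $(f,g)$ lies in $K_1 \times_{K_0} K_1$, so the inverse of the Segal map produces a unique $2$-simplex $\sigma \in K_2$, and we set $g \circ f := d_1 \sigma$, the remaining edge. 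The simplicial identities relating the face maps on $K_2$ guarantee that $d_1\sigma$ has the correct source and target.

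The main obstacle, and the heart of the argument, is verifying associativity of this composition and then checking that $\nerve(\mathcal C) \cong K$. For associativity I would compare two ways of composing three arrows $f,g,h$ by invoking the $n=3$ Segal isomorphism: the $3$-simplex obtained from the compatible triple $(f,g,h)$ has faces that encode both $(h\circ g)\circ f$ and $h\circ(g\circ f)$, and the uniqueness coming from the bijectivity of the Segal map forces these to coincide. Unitality follows similarly from the degenerate $2$-simplices produced by $s_0$ and $s_1$. Finally, to see $\nerve(\mathcal C) \cong K$, I would observe that in each dimension the Segal isomorphism identifies $K_n$ with strings of $n$ composable morphisms in $\mathcal C$, which is exactly $\nerve(\mathcal C)_n$; the remaining verification is that these bijections are compatible with all face and degeneracy maps, i.e. assemble into a map of simplicial sets. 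This compatibility is a routine but slightly tedious check against the simplicial identities, and is where most of the bookkeeping lies.
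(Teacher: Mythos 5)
Your proposal is correct and follows essentially the same route as the paper: the nerve direction by unwinding functors out of $[n]$, and the converse by building a category with objects $K_0$, morphisms $K_1$, and composition defined through the inverse of the $n=2$ Segal map $(d_0,d_2)\colon K_2 \to K_1 \times_{K_0} K_1$. The only difference is one of thoroughness: the paper leaves associativity, unitality, and the identification $\nerve(\mathcal C) \cong K$ to the reader, whereas you correctly sketch these via the $n=3$ Segal isomorphism, degenerate $2$-simplices, and the dimensionwise Segal bijections, respectively.
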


\begin{proof}
	It is not hard to check that the nerve of a category is a 1-Segal set.  Conversely, let $K$ be a 1-Segal set.  Define a category with object set $K_0$ and morphisms $K_1$, with source and target maps given by $d_1$ and $d_0$, respectively, and identity maps given by degenerate 1-simplices.  To define composition, consider the diagram
	\begin{equation} \label{comp}
		K_1 \times_{K_0} K_1 \xleftarrow{(d_0,d_2)} K_2 \xrightarrow{d_1} K_1. 
	\end{equation}
	By the 1-Segal condition, the first map is an isomorphism, so we can define a composition law via the composite $d_1(d_0,d_2)^{-1}$.  We leave the verification of associativity and unitality to the reader.
\end{proof}

\section{2-Segal sets} 

In this section, we introduce 2-Segal sets and their algebraic structure.

If we think of $G(n)$ as a ``triangulation" of a line segment, then we can generalize the Segal maps by moving up a dimension and triangulating polygons.  In the first nontrivial case, there are two triangulations of a square:
\begin{equation} \label{squares}
	\xymatrix{\mathcal T_1: & 3 & 2 \ar[l]  & \mathcal T_2:& 3 & 2 \ar[l] \\
	& 0 \ar[u] \ar[r] \ar[ur] & 1 \ar[u] && 0 \ar[u] \ar[r] & 1. \ar[u] \ar[ul]} 
\end{equation}
Each gives two faces of the boundary of a 3-simplex $\Delta[3]$.  Any triangulation of an $(n+1)$-gon similarly induces a map
\[ K_n \rightarrow \underbrace{K_2 \times_{K_1} \cdots \times_{K_1} K_2.}_{n-1} \]

As we did before, we can map these inclusions of simplicial sets into a fixed simplicial set $K$:
\[ \xymatrix{& \Hom(\mathcal T_1, K) \cong K_2 \times_{K_1} K_2 \\
	K_3 \cong \Hom(\Delta[3], K) \ar[ur]^{(d_1, d_3)} \ar[dr]^{(d_0, d_2)} & \\
	& \Hom(\mathcal T_2, K) \cong K_2 \times_{K_1} K_2.} \]

\begin{definition}
	A \emph{2-Segal set} is a simplicial set $K$ such that, for every $n \geq 3$ and any triangulation $\mathcal T$ of a regular $(n+1)$-gon with cyclically labeled vertices, the associated 2-Segal map
	\[ K_n \rightarrow \underbrace{K_2 \times_{K_1} \cdots \times_{K_1} K_2}_{n-1} \]
	is an isomorphism.
\end{definition}

Just as 1-Segal set has an algebraic description, as the nerve of a category, a 2-Segal set can be considered as a similar, but weaker, algebraic object.  As before, given a 2-Segal set $K$, we can take the sets $K_0$ and $K_1$ as the sets of objects, and morphisms, respectively.  But now, if we try to define a composition law as in \eqref{comp}, the left-going map need not be an isomorphism.  The only thing we can do, given an element $(x,y) \in K_1 \times_{K_0} K_1$, is take its preimage under the map $(d_0, d_2)$ and apply the map $d_1$.  Thus, the composite of $(x,y)$ need not exist if $(d_0, d_2)$ is not surjective, and need not be unique if this map is not injective.  

However, a close inspection of the triangulations in \eqref{squares} shows that this composition is associative.  The boundary of each square can be thought of as three composable arrows, together with a composite of all three.  The fact that each of the two triangulations uniquely determines the same 3-simplex establishes that this composite is independent of the possible bracketings for the three arrows.

\begin{example}
	A \emph{partial monoid} is a set $M$ equipped with a domain of multiplication $M_2 \subseteq M \times M$ together with a binary operation 
	\[ \cdotp \colon M_2 \rightarrow M \]
	such that:
	\begin{itemize}
		\item $(m \cdotp m') \cdotp m''$ is defined if and only if $m \cdotp (m' \cdotp m'')$ is defined and they are equal; and
		
		\item there is an identity element $1 \in M$ such that $(1,m), (m,1) \in M_2$ and $1 \cdotp m= m = m \cdotp 1$ for all $m \in M$.
	\end{itemize}

	As for a ordinary monoid, we can take a nerve construction, as follows.  Let $M_0={1}$.  For $k \geq 1$, let 
	\[ M_k \subseteq M^k \]
	be the subset of composable k-tuples, or elements 
	\[ (m_1, \ldots, m_k) \in M^k \]
	such that 
	\[ (m_1 \cdots m_i, m_{i+1}) \in M_2 \]
	for every $1 \leq i <k$.  Face maps are given by composition, and the degeneracy maps are defined by inserting the identity element 1 in the indicated position. 

	The nerve of a partial monoid is a 2-Segal set that is not 1-Segal, since composition does not always exist.  Observe that when it does exist, composition is unique, however.
\end{example}

\section{2-Segal sets from graphs}

Let us now focus on a family of combinatorial examples arising from finite graphs, as described in \cite{waldsets}.  It was inspired by the decomposition space of all graphs as described in \cite{gckt}, and can be thought of as providing discrete versions of the construction there, corresponding to individual graphs.

\begin{definition}
Let $G$ be a finite graph.  We associate to $G$ a simplicial set $X^G$ as follows.
\begin{itemize}
	\item The set $X^G_0$ has a single element, denoted by $\varnothing$.
	
	\item The set $X^G_1$ is the set of all subgraphs of $G$.  The empty subgraph $\varnothing$ is the single degenerate 1-simplex.
	
	\item Any $X^G_n$ has elements $(H; S_1, \ldots, S_n)$ where $H$ is a subgraph of $G$ and the sets $S_1, \ldots, S_n$ form a partition of the set of vertices into $n$ disjoint (but possibly empty) sets. 
\end{itemize}
\end{definition}

\begin{example}
Consider the graph 
\begin{center}
	\begin{tikzpicture}
		\def\l{1cm} 
		\draw (0,0) node (G){$G:$};
		\draw (\l,0)  node[vnode=$a$](a){};
		\draw (2*\l,0) node[vnode=$b$](b){};
		\draw (3*\l,0) node[vnode=$c.$](c){};
		\draw (a)--(b);
		\draw (b)--(c);
	\end{tikzpicture}
\end{center}
By definition, we have $X^G_0 = \{\varnothing\}$.  The 1-simplices are given by: 

\begin{tikzpicture}
	\def\l{.9cm} 
	
	\draw (0,0) node {$X^G_1:$};
	\draw (\l,0) node {$\varnothing$};
	\draw (\l,0) node[vellipsefirstone=\l]{};
	
	\begin{scope}[xshift=2*\l]
		\draw (0,0) node [vnode=$a$](a){};
		\draw (0,0) node[vellipsefirstone=\l]{};
	\end{scope}
	
	\begin{scope}[xshift=3*\l]
		\draw (0,0) node [vnode=$b$](b){};
		\draw (0,0) node[vellipsefirstone=\l]{};
	\end{scope}
	
	\begin{scope}[xshift=4*\l]
		\draw (0,0) node [vnode=$c$](c){};
		\draw (0,0) node[vellipsefirstone=\l]{};
	\end{scope}
	
	\begin{scope}[xshift=4*\l]
		\draw (\l,0) node [vnode=$a$](a){};
		\draw (2*\l,0) node [vnode=$b$](b){};
		\draw (1.5*\l,0) node[vellipsefirsttwo=\l]{};
	\end{scope}
	
	\begin{scope}[xshift=6.5*\l]
		\draw (\l,0) node [vnode=$a$](a){};
		\draw (2*\l,0) node [vnode=$b$](b){};
		\draw (a)--(b);
		\draw (1.5*\l,0) node[vellipsefirsttwo=\l]{};
	\end{scope}
	
	\begin{scope}[xshift=9.0*\l]
		\draw (\l,0) node [vnode=$a$](a){};
		\draw (2*\l,0) node [vnode=$c$](c){};
		\draw (1.5*\l,0) node[vellipsefirsttwo=\l]{};
	\end{scope}
	
	\begin{scope}[yshift=-2*\l,xshift=\l]
		\draw (\l,0) node [vnode=$b$](b){};
		\draw (2*\l,0) node [vnode=$c$](c){};
		\draw (1.5*\l,0) node[vellipsefirsttwo=\l]{};
	\end{scope}
	
	\begin{scope}[yshift=-2*\l, xshift=3.5*\l]
		\draw (\l,0) node [vnode=$b$](b){};
		\draw (2*\l,0) node [vnode=$c$](c){};
		\draw (b)--(c);
		\draw (1.5*\l,0) node[vellipsefirsttwo=\l]{};
	\end{scope}
	
	\begin{scope}[yshift=-2*\l, xshift=6*\l]
		\draw[fill] (\l,0) node [vnode=$a$](a){};
		\draw (2*\l,0) node [vnode=$b$](b){};
		\draw (3*\l,0) node [vnode=$c$](c){};
		\draw (2*\l,0) node[vellipsefirstthree=\l]{};
	\end{scope}
	
	\begin{scope}[yshift=-4*\l,xshift=\l]
		\draw (\l,0) node [vnode=$a$](a){};
		\draw (2*\l,0) node [vnode=$b$](b){};
		\draw (3*\l,0) node [vnode=$c$](c){};
		\draw (a)--(b);
		\draw (2*\l,0) node[vellipsefirstthree=\l]{};
	\end{scope}
	
	\begin{scope}[yshift=-4*\l, xshift=5.5*\l]
		\draw (\l,0) node [vnode=$a$](a){};
		\draw (2*\l,0) node [vnode=$b$](b){};
		\draw (3*\l,0) node [vnode=$c$](c){};
		\draw (b)--(c);
		\draw (2*\l,0) node[vellipsefirstthree=\l]{};
	\end{scope}
	
	\begin{scope}[yshift=-6*\l,xshift=3*\l]
		\draw (\l,0) node [vnode=$a$](a){};
		\draw (2*\l,0) node [vnode=$b$](b){};
		\draw (3*\l,0) node [vnode=$c.$](c){};
		\draw (a)--(b);
		\draw (b)--(c);
		\draw (2*\l,0) node[vellipsefirstthree=\l]{};
	\end{scope}
\end{tikzpicture} 
\end{example}

Rather than write out a list of 2-simplices, of which there are many, let us illustrate the simplicial structure using them.  In our depiction of 2-simplices, we assume that the sets in blue come before the sets in orange in the ordering in the partition.  Face maps of 2-simplices are given by cutting or merging:
\begin{center}
	\begin{tikzpicture}
		\def\l{1cm} 
		\begin{scope}
			
			\draw[fill] (\l,0) node [vnode=$a$](a){};
			\draw (2*\l,0) node [vnode=$b$](b){};
			\draw (3*\l,0) node [vnode=$c$](c){};
			\draw (a)--(b);
			
			\draw (\l,0) node[vellipsefirstone=\l]{};
			\draw (2.5*\l, 0) node[vellipsesecondtwo=\l]{};
		\end{scope}
		
		\begin{scope}[yshift={-1.7*\l}, xshift={-3*\l}]
			\draw (\l,0) node [vnode=$b$](b){};
			\draw (2*\l,0) node [vnode=$c$](c){};
			\draw (1.5*\l, 0) node[vellipsefirsttwo=\l]{};
		\end{scope}
		
		\begin{scope}[yshift=-2.2*\l]
			\draw (\l,0) node [vnode=$a$](a){};
			\draw (2*\l,0) node [vnode=$b$](b){};
			\draw (3*\l,0) node [vnode=$c$](c){};
			\draw (a)--(b);
			\draw (2*\l, 0) node[vellipsefirstthree=\l]{};
		\end{scope}
		
		\begin{scope}[yshift=-1.7*\l, xshift=4*\l]
			\draw (\l,0) node [vnode=$a.$](a){};
			\draw (\l,0) node[vellipsefirstone=\l]{};
		\end{scope}
		
		\draw[|->, thick] (0.5*\l, -0.1*\l)--node[above](d0){$d_0$}(-1.0*\l, -1.0*\l);
		\draw[|->, thick] (2*\l, -0.8*\l)--node[left](d1){$d_1$}(2*\l, -1.3*\l);
		\draw[|->, thick] (3.5*\l, -0.6*\l)--node[above](d2){$d_2$}(4.4*\l, -1.3*\l);
	\end{tikzpicture}
\end{center}
On the other hand, degeneracy maps are given by inserting empty sets into the partition, with the location specified by the indexing:
\begin{center}
	\begin{tikzpicture}
		\def\l{1cm} 
		\begin{scope}
			
			\draw (\l,0) node [vnode=$a$](a){};
			\draw (2*\l,0) node [vnode=$b$](b){};
			\draw (a)--(b);
			\draw (1.5*\l,0) node[vellipsefirsttwo=\l]{};
		\end{scope}
		
		\begin{scope}[yshift={-2*\l}, xshift={-2*\l}]
			\draw (\l,0) node (emp){$\varnothing$};
			\draw (2*\l,0) node [vnode=$a$](a){};
			\draw (3*\l,0) node [vnode=$b$](b){};
			\draw (a)--(b);
			\draw (\l,0) node[vellipsefirstone=\l]{};
			\draw (2.5*\l, 0) node[vellipsesecondtwo=\l]{};
		\end{scope}
		
		\begin{scope}[yshift=-2*\l, xshift=2*\l]
			\draw[fill] (\l,0) node [vnode=$a$](a){};
			\draw (2*\l,0) node [vnode=$b$](b){};
			\draw (3*\l,0) node(emp){$\varnothing.$};
			\draw (a)--(b);
			\draw (1.5*\l,0) node[vellipsefirsttwo=\l]{};
			\draw (3*\l, 0) node[vellipsesecondone=\l]{};
		\end{scope}
		
		\draw[|->, thick] (0.4*\l, -0.2*\l)--node[above, yshift=0.1cm](s0){$s_0$}(-0.2*\l, -1.0*\l);
		\draw[|->, thick] (2.6*\l, -0.2*\l)--node[above](s1){$s_1$}(3.5*\l, -1.0*\l);
	\end{tikzpicture}
\end{center}

The following diagram illustrates that $X^G$ is not 1-Segal, since a composite of the subgraphs $a$ and $b$ can be obtained from each of the two indicated 2-simplices:
\begin{center}
	\begin{tikzpicture}
		\def\l{1cm} 
		\begin{scope}
			\draw (\l,0) node [vnode=$a$](a){};
			\draw (2*\l,0) node [vnode=$b$](b){};
			\draw (\l,0) node[vellipsefirstone=\l]{};
			\draw (2*\l, 0) node[vellipsesecondone=\l]{};
			\draw (-0.3*\l,-0.7*\l) node [vnode=$a$](a2){};
			\draw (3.3*\l,-0.7*\l) node [vnode=$b$](b2){};
			\draw (-0.3*\l, -0.7*\l) node[vellipsefirstone=\l]{};
			\draw (3.3*\l,-0.7*\l) node[vellipsefirstone=\l]{};
			\draw[|->, shorten >=0.35cm, shorten <=0.35cm, thick] (a)--node[above, xshift=-0.1cm]{$d_2$}(a2);
			\draw[|->, shorten >=0.35cm, shorten <=0.35cm, thick] (b)--node[above, xshift=0.1cm]{$d_0$}(b2);
		\end{scope}
		
		\draw (4.2*\l, 0) node{$\quad$and$\quad$};
		
		\begin{scope}[xshift=5.5*\l]
			\draw (\l,0) node [vnode=$a$](a){};
			\draw (2*\l,0) node [vnode=$b$](b){};
			\draw (a)--(b);
			\draw (\l,0) node[vellipsefirstone=\l]{};
			\draw (2*\l, 0) node[vellipsesecondone=\l]{};
			\draw (-0.3*\l,-0.7*\l) node [vnode=$a$](a2){};
			\draw (3.3*\l,-0.7*\l) node [vnode=$b.$](b2){};
			\draw (-0.3*\l, -0.7*\l) node[vellipsefirstone=\l]{};
			\draw (3.3*\l,-0.7*\l) node[vellipsefirstone=\l]{};
			\draw[|->, shorten >=0.35cm, shorten <=0.35cm, thick] (a)--node[above, xshift=-0.1cm]{$d_2$}(a2);
			\draw[|->, shorten >=0.35cm, shorten <=0.35cm, thick] (b)--node[above, xshift=0.1cm]{$d_0$}(b2);
		\end{scope}
	\end{tikzpicture}
\end{center}
This example shows that the 1-Segal map $(d_0,d_2) \colon X_2 \rightarrow X_1 \times_{X_0} X_1$ need not be injective. Furthermore, 1-simplices given by non-disjoint subgraphs cannot be composed, since overlapping subgraphs cannot be obtained as $d_0$ and $d_2$ of the same 2-simplex.  Thus, this 1-Segal map also need not be surjective.  However, we do have the following result.

\begin{theorem} \cite[\S 2]{waldsets}
	For any graph $G$, the simplicial set $X^G$ is 2-Segal.
\end{theorem}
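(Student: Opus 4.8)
The plan is to construct an explicit two-sided inverse to each 2-Segal map, organized by an induction that removes one triangle at a time from the triangulation. First I would record the combinatorial dictionary for the edges of an $n$-simplex: if $\sigma = (H; S_1, \dots, S_n)$, then for $0 \le i < j \le n$ the $1$-simplex indexed by the edge $\{i,j\}$ of the $(n+1)$-gon is the induced subgraph $H|_{S_{i+1} \cup \cdots \cup S_j}$. In particular, the ``long'' boundary edge $\{0,n\}$ records all of $H$ (every vertex and every edge, including those joining different blocks), while the ``short'' boundary edge $\{k-1,k\}$ records exactly the block $S_k$ together with its internally induced edges. Under the face maps, an inner $d_i$ with $0 < i < n$ replaces $S_i, S_{i+1}$ by $S_i \cup S_{i+1}$ and leaves the subgraph $H$ completely unchanged; this last point --- that the cross-block edges of $H$ are simply carried along by the inner face maps --- is what makes the reconstruction possible.

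With this dictionary in hand, an element of the target $X^G_2 \times_{X^G_1} \cdots \times_{X^G_1} X^G_2$ associated to a triangulation $\mathcal T$ is precisely a family of $2$-simplices, one for each triangle of $\mathcal T$, agreeing as $1$-simplices along each internal diagonal. I would prove bijectivity of the 2-Segal map by strong induction on the number of vertices of the polygon, the base case (a single triangle, $n=2$) being the identity map. For the inductive step I would use the standard fact that any triangulation of a polygon with at least four vertices has an ``ear'' --- a triangle two of whose sides are boundary edges --- and, moreover, that one can choose an ear whose apex is an interior label $m \in \{1,\dots,n-1\}$. (A triangulation always has at least two ears, and they cannot both sit at the long edge $\{0,n\}$, since that edge lies in only one triangle.) Removing such an ear $T_0 = \{m-1,m,m+1\}$ corresponds simplicially to the inner face map $d_m$: a compatible $\mathcal T$-family is the same datum as a compatible family for the triangulation $\mathcal T'$ of the smaller polygon on $\{0,\dots,n\}\setminus\{m\}$ together with the ear $2$-simplex $\sigma_{T_0}$, the two being glued along the diagonal $\{m-1,m+1\}$.

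The inductive hypothesis then produces a unique $(n-1)$-simplex $\tau = (H; \tilde S_1, \dots, \tilde S_{n-1})$ realizing the $\mathcal T'$-family, and the gluing condition says exactly that the block $\tilde S_m$ (the short edge $\{m-1,m+1\}$ of the smaller polygon) carries the same subgraph as the long edge of $\sigma_{T_0}$. I would build the desired $n$-simplex by keeping the subgraph $H$ unchanged and splitting the single block $\tilde S_m$ into the two ordered blocks supplied by $\sigma_{T_0}$; the compatibility condition guarantees that these two blocks partition $\tilde S_m$, and because $H$ already encodes every cross-block edge, no further choices are needed. Checking that this operation is inverse to restriction along $\mathcal T$, and that it is forced (hence unique), then completes the induction.

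The main obstacle I anticipate is one of bookkeeping rather than of mathematical depth: one must keep straight the distinction between edges that lie within a single block and edges that join two different blocks, and verify that the compatibility conditions imposed only along the \emph{internal} diagonals suffice to pin down the full subgraph $H$ together with its ordered partition. The conceptual key that tames this --- and that I would emphasize --- is that the inner face maps act trivially on the subgraph, so all of the edge data of $H$ is already visible on any single triangle containing the long edge, and the triangulation's internal agreements serve only to reconcile the partition blocks.
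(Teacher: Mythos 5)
Your proposal is correct, and its engine is exactly the reconstruction idea the paper sketches: inner face maps leave the underlying subgraph $H$ untouched while merging blocks, so an $n$-simplex is recovered from a face carrying the full graph together with the face that refines the merged block (this is precisely the paper's $n=3$ diagram, where the $d_2$-face supplies $H$ and the $d_0$-face supplies the refinement). The paper only illustrates this local step, so your additional scaffolding --- the edge dictionary $\{i,j\} \mapsto H|_{S_{i+1}\cup\cdots\cup S_j}$, and the ear-removal induction using the two-ears theorem to find an ear whose apex avoids $0$ and $n$ (so that peeling it off is the inner face $d_m$) --- is a sound and complete way to promote that step to arbitrary triangulations, matching the strategy of the cited reference.
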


The idea behind the proof is illustrated in the following diagram:
\begin{center}
	\begin{tikzpicture}
		\def\l{1cm} 
		\begin{scope}
			
			\draw[fill] (\l,0) node [vnode=$a$](a){};
			\draw (2*\l,0) node [vnode=$b$](b){};
			\draw (3*\l,0) node [vnode=$c$](c){};
			\draw (b)--(c);
			
			\draw (\l,0) node[vellipsefirstone=\l]{};
			\draw (2*\l, 0) node[vellipsesecondone=\l]{};
			\draw (3*\l, 0) node[vellipsethirdone=\l]{};
		\end{scope}
		
		\begin{scope}[yshift=-3*\l, xshift=\l]
			\draw (\l,0) node [vnode=$b$](b){};
			\draw (2*\l,0) node [vnode=$c$](c){};
			\draw (b)--(c);
			\draw (\l,0) node[vellipsefirstone=\l]{};
			\draw (2*\l, 0) node[vellipsesecondone=\l]{};
		\end{scope}
		
		\begin{scope}[ xshift=4*\l]
			\draw[fill] (\l,0) node [vnode=$a$](a){};
			\draw (2*\l,0) node [vnode=$b$](b){};
			\draw (3*\l,0) node [vnode=$c$](c){};
			\draw (b)--(c);
			\draw (\l,0) node[vellipsefirstone=\l]{};
			\draw(2.5*\l, 0) node[vellipsesecondtwo=\l]{};
		\end{scope}
		
		\begin{scope}[yshift=-3*\l, xshift=5*\l]
			\draw (\l,0) node [vnode=$b$](b){};
			\draw (2*\l,0) node [vnode=$c$](c){};
			\draw (b)--(c);
			\draw (1.5*\l, 0) node[vellipsefirsttwo=\l]{};
		\end{scope}
		
		\draw[|->, thick] (2*\l, -1*\l)--node[left](d0){$d_0$}(2*\l, -2.1*\l);
		\draw[|->, thick] (3.5*\l, 0)--node[above](d2){$d_2$}(4.5*\l, 0);
		\draw[|->, thick] (3.5*\l, -3*\l)--node[below](d1){$d_1$}(5.4*\l, -3*\l);
		\draw[|->, thick] (6*\l, -1*\l)--node[right](d0r){$d_0$}(6*\l, -2.2*\l);
	\end{tikzpicture}
\end{center}
Observe that we could recover the 3-simplex in the top left-hand corner from the data of the rest of the diagram: the 2 -simplex in the top right-hand corner gives us the underlying graph, and the 2-simplex in the lower left-hand corner shows us how to refine the partition to get the desired 3-simplex.  Here, note that we regard the green set as coming after the orange one in the ordering on the partition.

\section{2-Segal sets from trees}

Again, inspired by a decomposition space example from \cite{gckt}, in \cite{trees} we developed examples of 2-Segal sets coming from rooted trees.  Here, we think of a \emph{tree} as a graph with no cycles, with one vertex specified as its root.  As with the graphs above, we assume that the vertices of a tree are labeled.  We begin with some necessary definitions.

\begin{definition} 
	Let $T$ be a rooted tree. An \emph{admissible cut} on $T$ is a partition of the vertices $V(T)= L \amalg U$ such that $L$ is a \emph{lower subtree} of $T$, in that it is either a tree containing the root of $T$ or empty.
\end{definition}

The following definition provides a way to have multiple admissible cuts on the same tree.

\begin{definition}
	A \emph{layering of $n-1$ cuts} of a rooted tree $T$ is a sequence of partially ordered subsets of vertices of $T$
	\[V(T)=L_0\supseteq L_1 \supseteq \cdots\supseteq L_n=\varnothing,\] such that each $L_k$ is a lower subtree of $T$. 
\end{definition}

Similarly to graphs equipped with an ordered partition of their edges, we can think of a rooted tree $T$ with a layering of $n-1$ cut as an $n$-simplex of a simplicial set $X^T$, with face maps given by cutting and merging along cuts.  For example, consider the following tree with two cuts, and then the tree obtained by removing all vertices and edges that appear above the topmost cut:
\[ \begin{tikzpicture}[scale=.60,line cap=round,line join=round,>=triangle 45,x=1cm,y=1cm, dot/.style = {circle, gray, minimum size=#1, inner sep=0pt, outer sep=0pt}, 
		dot/.default = 3pt]
		
		\draw [-] (0,0)-- (1,1);
		\draw [] (0,0)-- (-1,1);
		\draw [] (0,-1)-- (0,0);
		\draw [] (-1,1)-- (1,3);
		\draw [] (0,2)-- (-1,3);
		\draw [] (-1,1)-- (-2,2);
		\draw [-, dashed] (-2.12,1.43)-- (1.20,2.8);
		\draw [-, dashed] (-1.05,-.5)-- (1.20,-.5);
		\draw [fill=black] (0,-1) circle (3pt);
		\draw[color=black] (-.4,-.8) node {$h$};
		\draw [fill=white] (-1,1) circle (3pt);
		\draw[color=black] (-1,1.43) node {$e$};
		\draw [fill=white] (1,1) circle (3pt);
		\draw[color=black] (1.16,1.43) node {$f$};
		\draw [fill=white] (0,0) circle (3pt);
		\draw[color=black] (0.3,-0.24) node {$g$};
		\draw [fill=white] (0,2) circle (3pt);
		\draw [fill=white] (-2,2) circle (3pt);
		\draw[color=black] (-1.90,2.43) node {$a$};
		\draw [fill=white] (-1,3) circle (3pt);
		\draw[color=black] (-0.84,3.43) node {$b$};
		\draw [fill=white] (1,3) circle (3pt);
		\draw[color=black] (1.16,3.43) node {$c$};
		\draw[color=black] (0.3,2.01) node {$d$};
	\end{tikzpicture}
	\hspace{2.5cm}
	\begin{tikzpicture}[scale=.60,line cap=round,line join=round,>=triangle 45,x=1cm,y=1cm, dot/.style = {circle, gray, minimum size=#1,
			inner sep=0pt, outer sep=0pt}, 
		dot/.default = 3pt]
		\draw [] (-2,-1)-- (-2,0);
		\draw [] (-2,0)-- (-1,1);
		\draw [] (-2,0)-- (-3,1);
		\draw [] (-3,1)-- (-2,2);
		\draw [-, dashed] (-3.05,-.6)-- (-1.20,-.6);
			\draw [fill=white] (-2,2) circle (3pt);
			\draw [fill=white] (-2,0) circle (3pt);
			\draw[color=black] (-1.70,-.20) node {$g$};
			\draw [fill=white] (-3,1) circle (3pt);
			\draw[color=black] (-1.7,2.1) node {$d$};
			\draw[color=black] (-2.98,1.43) node {$e$};
			\draw [fill=white] (-1,1) circle (3pt);
			\draw[color=black] (-0.84,1.43) node {$f$};
			\draw [fill=black] (-2,-1) circle (3pt);
			\draw[color=black] (-2.3,-1) node {$h$.};
\end{tikzpicture} \]  
We can think of the second tree as being the lowest face of the first tree; analogously to the graph example, we can obtain other faces by removing cuts or the lowest part of the tree.  Similarly, we can define degeneracy maps by repeating cuts, or inserting them above or below the tree.

Note that taking the top face map, which discards everything below the lowest cut, can result in a \emph{forest}, or disjoint union of trees, rather than a single tree.  Thus, we need to extend our definition of admissible cuts to forests, not just to trees.  We refer the reader to \cite{trees} for a more nuanced definition, and simply suggest here that the following one can be made more precise.

\begin{definition}
	An \emph{admissible subforest} of $T$ is any forest (or tree) obtained from face maps along admissible cuts.
\end{definition}

Formalizing the idea described above, we make the following definition.

\begin{definition}
	Let $T$ be a labelled rooted tree. We define a simplicial set $X^T$ as follows:
	\begin{itemize}
		\item $X^T_0=\{\varnothing\}$;
		
		\item $X^T_1$ is the set of admissible subforests of $T$; and
		
		\item for $n \geq 2$, $X^T_n$ is the set of all admissible subforests $H\in X^T_1$ together with a layering of $n-1$ cuts $H\supseteq L_1\supseteq\dots\supseteq L_n=\varnothing$.
	\end{itemize}
\end{definition}

\begin{theorem} \cite{trees}
	For any rooted tree $T$, the simplicial set $X^T$ is 2-Segal. 
\end{theorem}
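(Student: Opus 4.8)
The plan is to reduce the 2-Segal condition to a statement about a single diagonal of the polygon, and then verify that statement by explicitly reconstructing an $n$-simplex from its two pieces, isolating the lower-subtree requirement into one combinatorial lemma. I would begin with the standard reduction: since every triangulation of the $(n+1)$-gon is obtained by successively inserting diagonals, and cutting along a single diagonal $\{i,j\}$ expresses the corresponding 2-Segal map as a fibre product of the maps for the two resulting sub-polygons, the simplicial set $X^T$ is 2-Segal if and only if for every $n$ and every diagonal $\{i,j\}$ (with $0\le i<j\le n$, $j-i\ge 2$, excluding the boundary edge $\{0,n\}$) the single map
\[ X^T_n \longrightarrow X^T_{\{i,i+1,\dots,j\}} \times_{X^T_{\{i,j\}}} X^T_{\{0,\dots,i,j,\dots,n\}} \]
is a bijection. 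I would phrase everything in terms of the layering description, writing $L_0=H$ so that an element of $X^T_n$ is a subforest with a layering $L_0\supseteq L_1\supseteq\cdots\supseteq L_n=\varnothing$ by lower subforests.

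Next I would compute what the diagonal map does to such a layering. Writing $H|_W$ for the subforest of $H$ supported on a vertex set $W$, the inner face on $\{i,\dots,j\}$ is the subforest $H|_{L_i\setminus L_j}$ equipped with the layering $L_i\setminus L_j\supseteq L_{i+1}\setminus L_j\supseteq\cdots\supseteq L_j\setminus L_j=\varnothing$, while the outer face on $\{0,\dots,i,j,\dots,n\}$ is $H$ itself equipped with the layering that skips the intermediate terms, namely $L_0\supseteq\cdots\supseteq L_i\supseteq L_j\supseteq\cdots\supseteq L_n$; the two restrict to the same subforest $H|_{L_i\setminus L_j}$ on the shared edge $\{i,j\}$. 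Injectivity is then immediate: the subforest $H$ and all $L_m$ with $m\le i$ or $m\ge j$ are recorded by the outer face, while each intermediate $L_m$ (for $i<m<j$) is recovered as $(L_m\setminus L_j)\cup L_j$ from the inner face. This step is routine.

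Surjectivity is the crux, and the single point that needs real care is the following lifting lemma: \emph{if $L_j\subseteq L_i$ are lower subforests of $H$, then a vertex set $N$ with $L_j\subseteq N\subseteq L_i$ is itself a lower subforest of $H$ if and only if $N\setminus L_j$ is a lower subforest of the induced subforest $H|_{L_i\setminus L_j}$.} Granting this, surjectivity follows: given compatible inner and outer faces, one reads off $H$ and the outer terms from the outer face, sets $L_m=(\text{$m$-th inner term})\cup L_j$ for the intermediate indices, and the lemma guarantees that each such $L_m$ is a genuine lower subforest of $H$ nested correctly between $L_i$ and $L_j$, so the reconstructed layering is a valid $n$-simplex mapping to the given pair.

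I expect the lifting lemma to be the main obstacle, and it is precisely where the tree hypothesis is used and where the graph argument of the previous section does not transfer directly: in the graph case every ordered partition of the vertices is allowed, so regluing the faces is automatic, whereas here one must check that the local ``downward-closed toward the root'' condition recorded by each individual cut assembles to global admissibility. The delicate feature is that the top face map can produce a forest rather than a tree, so the lemma must be proved using the extended notion of admissible subforest of a forest, and one must confirm that the order on $H|_{L_i\setminus L_j}$ inherited from $H$ is exactly the one making downward-closedness a transitive, layer-by-layer condition. Once the lemma is established for a single cut, the reduction in the first paragraph promotes it to all triangulations and hence to the full 2-Segal property; the cases $n\le 2$ need no argument, since those Segal maps are identities.
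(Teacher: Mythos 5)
Your proposal is correct and takes essentially the same approach as the paper: the paper's proof idea (illustrated in Figure \ref{2segfig} for $n=3$) is exactly your single-diagonal reconstruction, with the outer face supplying the underlying forest and all cuts but the missing ones, the inner face supplying those missing cuts, and the shared edge $\{i,j\}$ dictating the gluing. Your lifting lemma for lower subforests is the precise formalization of the paper's remark that the intersection ``informs us how to glue,'' and it does hold, by the convexity of $L_i\setminus L_j$ and the downward-closedness argument you outline.
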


We illustrate the idea of the proof with an example, as we did for the case of a graph.  When $n=3$, the 2-Segal maps $X_3^T \to X_2^T \times_{X_1^T}X_2^T$ induced by the triangulations of \eqref{squares} need to be isomorphisms; we illustrate an example of the relevant pushouts in Figure \ref{2segfig}. 
\begin{figure}
	\centering
	\includegraphics[scale=0.7]{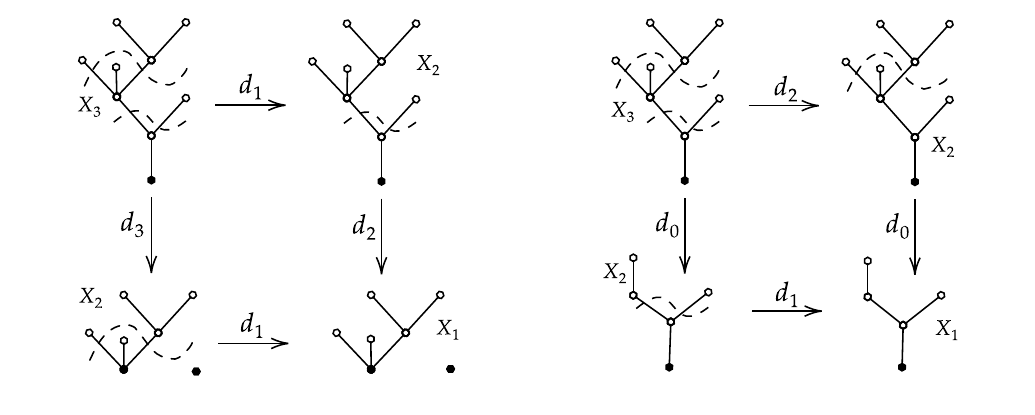}
	\caption{2-Segal maps for $X^T$. }
	\label{2segfig}
\end{figure}
To recover the trees with two cuts in the upper left corners, observe that the trees in the top right corners contain all the necessary data except one cut. The information of this missing cut is present in the bottom left corners. Their intersection, the tree in the bottom right corner, informs us how to glue these two trees, hence determines the missing cut.

The simplicial set $X^T$ is never $1$-Segal, unless $T$ is the empty tree. Indeed, if $T \neq\varnothing$ then the Segal map
\[ (d_0,d_2) \colon X_2^T\to X_1^T\times X_1^T \]
is not surjective as $(T,T)$ is not in the image.

However, unlike the case for graphs, it is always injective.

\begin{prop} \cite{trees}
	If $T$ is a labelled tree with vertex set $V$, then the Segal map
	\[ (d_0,d_2)\colon X_2^T\to X_1^T\times X_1^T \]
	is injective.  
\end{prop}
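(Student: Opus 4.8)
The plan is to recover the $2$-simplex from its image, and the real work is an observation about what an admissible subforest can be. First I would unwind the Segal map. A $2$-simplex of $X^T$ is a pair $(H;L_1)$, where $H$ is an admissible subforest and $H\supseteq L_1\supseteq\varnothing$ is a single cut, so that $L_1$ is the lower part (containing the roots) and $H\setminus L_1$ the upper part. Reading the face maps as cutting and merging, $d_0$ discards the upper block and returns the lower subforest $L_1$, while $d_2$ discards the lower block and returns the subforest of $H$ spanned by the upper vertices $V(H)\setminus V(L_1)$. Hence the Segal map sends $(H;L_1)$ to the pair $\bigl(L_1,\,T[V(H)\setminus V(L_1)]\bigr)$, where I write $T[W]$ for the subforest of $T$ with vertex set $W$ and all edges of $T$ between vertices of $W$.

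The crucial point, and the feature that distinguishes trees from the graphs of the previous section, is that every admissible subforest of $T$ is a \emph{spanned} (induced) subforest: it equals $T[W]$ where $W$ is its own vertex set. I would prove this by induction on the face maps defining admissibility, starting from $T=T[V(T)]$ and observing that each cut deletes an entire block of vertices together with all their incident edges, never an isolated edge whose two endpoints both survive. Thus the operations producing admissible subforests preserve the property of being spanned, and in particular $H=T[V(H)]$ for any $2$-simplex $(H;L_1)$. By contrast, a subgraph of a graph may omit an edge between two of its vertices, which is exactly the freedom (a cut edge included in one $2$-simplex but not another) that destroyed injectivity in the graph example.

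With this in hand, injectivity is immediate. Suppose two $2$-simplices $(H;L_1)$ and $(H';L_1')$ have the same image. The first coordinates agree, so $L_1=L_1'$ as subforests, and in particular $V(L_1)=V(L_1')$. The second coordinates agree, so $V(H)\setminus V(L_1)=V(H')\setminus V(L_1')$. Taking the disjoint union with $V(L_1)=V(L_1')$ gives $V(H)=V(H')$, and since both subforests are spanned we get $H=T[V(H)]=T[V(H')]=H'$. Together with $L_1=L_1'$ this shows the two $2$-simplices coincide.

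The only genuine content is the inductive claim that admissible subforests are spanned by their vertex sets; once stated precisely this is routine, since cutting along an admissible cut removes vertices with all their incident edges. Given the informal definition of admissible subforest in the excerpt, the main care needed is to make the forest version of ``admissible cut'' precise enough that this deletion statement is literally true, after which the recovery of $(H;L_1)$ from $(d_0,d_2)$ requires no further work.
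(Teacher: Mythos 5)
Your proof is correct and is essentially the paper's approach made precise: the paper (deferring details to \cite{trees}) attributes injectivity to the ``rigidity'' of cuts in a rooted tree, and your key lemma --- that every admissible subforest is an induced (spanned) subforest, hence determined by its vertex set, because each cut deletes a block of vertices with all incident edges --- is exactly that rigidity spelled out, after which recovering $(H;L_1)$ from the two outer faces goes through as you say. One minor point: under the paper's conventions (as one can check against its Hall algebra computation $a\cdot b = G$, $b\cdot a = 0$ for the tree on one edge) it is $d_2$ that returns the lower subforest $L_1$ and $d_0$ the upper part, the reverse of your reading, but this swap is immaterial to injectivity of the pair $(d_0,d_2)$.
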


The idea is that rigidity in the definition of a cut, and inherent ordering of the vertices in a rooted tree, prevents the kind of counterexample that we had in the graph example.

Observe that if $T$ is a rooted tree with underlying graph $G$, then $X^T$ is a sub-2-Segal set of $X^G$.  

\section{Hall algebras of reduced 2-Segal sets}

One of the original motivations for developing 2-Segal structures was the unification of various Hall algebra constructions \cite{dk}.  We give the most basic version here for suitable 2-Segal sets.

\begin{definition}
	Let $X$ be a 2-Segal set with $X_0=\ast$ and only finitely many simplices in each dimension.  Define its \emph{Hall algebra} $\mathcal H(X)$ over a field $\Bbbk$ to be the $\Bbbk$-vector space generated by $X_1$.  Its multiplication is given by
	\[ x \cdotp y = \sum_z g^z_{xy}z, \]
	where $g^z_{xy}$ counts the number of elements $\sigma \in X_2$ with $d_0(\sigma)=y$, $d_1(\sigma)=z$, and $d_2(\sigma)=x$.
\end{definition}

\begin{prop} \cite[\S 3.4]{dk}
	If $X$ is as above, then its Hall algebra $\mathcal H(X)$ is an associative, unital algebra.
\end{prop}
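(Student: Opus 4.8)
The plan is to read off both associativity and the unit from the combinatorics of $X_3$ and $X_2$, with the $2$-Segal condition for $n=3$ doing all of the real work for associativity. First I would unwind the two triple products in terms of the structure constants $g^z_{xy}$. For $x,y,w\in X_1$ one computes
\[ (x\cdotp y)\cdotp w=\sum_{u}\Big(\sum_z g^z_{xy}\,g^u_{zw}\Big)u,\qquad x\cdotp(y\cdotp w)=\sum_u\Big(\sum_v g^v_{yw}\,g^u_{xv}\Big)u, \]
so associativity is equivalent to the identity $\sum_z g^z_{xy}g^u_{zw}=\sum_v g^v_{yw}g^u_{xv}$ for all $x,y,w,u\in X_1$. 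I would prove this by showing that each side counts the same set of $3$-simplices.

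Next I would interpret the left-hand sum. By definition it counts pairs $(\sigma,\tau)\in X_2\times X_2$ with $d_2\sigma=x$, $d_0\sigma=y$, $d_2\tau=d_1\sigma$, $d_0\tau=w$, $d_1\tau=u$; that is, pairs glued along $d_1\sigma=d_2\tau$, which is exactly the fiber product $X_2\times_{X_1}X_2$ attached to the triangulation $\mathcal T_1$ of \eqref{squares} with the outer data $x,y,w,u$ fixed. The $2$-Segal condition says the map $(d_1,d_3)\colon X_3\to X_2\times_{X_1}X_2$ is a bijection, so these pairs correspond bijectively to $3$-simplices $\rho\in X_3$ with $d_3\rho=\sigma$ and $d_1\rho=\tau$. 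A routine check with the simplicial identities then rewrites the pinned faces as edges of $\rho$: the left sum equals the number of $\rho\in X_3$ whose edges $01,12,23,03$ equal $x,y,w,u$ respectively, the two diagonals $02,13$ being unconstrained and $02$ recording the intermediate value $z$.

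Then I would run the identical argument for the right-hand sum using the other triangulation $\mathcal T_2$ and the bijection $(d_0,d_2)\colon X_3\to X_2\times_{X_1}X_2$; the same bookkeeping shows it also equals the number of $\rho\in X_3$ with edges $01,12,23,03$ equal to $x,y,w,u$. Since both sides count the same subset of $X_3$, the identity holds and $\mathcal H(X)$ is associative. The main obstacle here is purely organizational: keeping straight which iterated face $d_id_j$ of a $3$-simplex yields which edge under each of the two triangulations, so that the two fiber products are correctly matched to the two bracketings. I would handle this by recording the boundary of $\Delta[3]$ once and reusing it, exactly as in the picture following \eqref{squares}.

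Finally, for unitality, since $X_0=\ast$ there is a unique degenerate $1$-simplex $e=s_0(\ast)\in X_1$, and I claim its class is a two-sided unit. Using the identities $d_0s_0=d_1s_0=\id$ and $d_2s_0=s_0d_1$ (together with $d_1x\in X_0=\ast$, so $s_0d_1x=e$), the $2$-simplex $s_0x$ has faces $d_2=e$, $d_0=d_1=x$, which contributes $x$ to $e\cdotp x$ with coefficient one; dually $s_1x$ has $d_0=e$, $d_1=d_2=x$ and contributes $x$ to $x\cdotp e$. To conclude $e\cdotp x=x=x\cdotp e$ it remains to see that $s_0x$ (resp.\ $s_1x$) is the \emph{only} $2$-simplex with its outer face degenerate, i.e.\ that any $\sigma\in X_2$ with $d_2\sigma=e$ (resp.\ $d_0\sigma=e$) is itself degenerate. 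This is precisely the unitality property of a $2$-Segal set; I would either verify it by hand in this discrete setting or invoke the general fact that every $2$-Segal set is unital. This unit check, rather than associativity, is the only place where something beyond the bare $n=3$ isomorphism is needed.
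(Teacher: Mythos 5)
Your associativity argument is correct, and it is essentially the argument of the cited source \cite{dk} (note that the paper itself gives no proof of this proposition, only the citation). Both iterated products have structure constants counting elements of the two fiber products $\Hom(\mathcal T_1,X)$ and $\Hom(\mathcal T_2,X)$ with the four outer edges pinned, and the two 2-Segal bijections $(d_1,d_3)$ and $(d_0,d_2)$ identify each count with the number of $3$-simplices whose edges $01,12,23,03$ are $x,y,w,u$; your bookkeeping of which iterated face gives which edge is consistent with the simplicial identities (e.g.\ $d_1d_3=d_2d_1$ for the shared edge $02$, and $d_1d_0=d_0d_2$ for the shared edge $13$), so the two sums count the same subset of $X_3$.

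The unit is the one place where your proof is not yet solid, and you have correctly isolated why: you need that every $\sigma\in X_2$ with $d_2\sigma=s_0(\ast)$ lies in the image of $s_0$ (and dually, every $\sigma$ with $d_0\sigma=s_0(\ast)$ lies in the image of $s_1$). However, you should not offer ``verify it by hand in this discrete setting'' as a casual alternative. With the paper's definition of 2-Segal set --- which imposes conditions only for $n\geq 3$ and says nothing about degeneracies --- this statement is exactly the \emph{unitality} condition, which Dyckerhoff and Kapranov treat as a separate hypothesis on their 2-Segal spaces; it does not fall out of formal manipulation of the $n=3$ bijections against degenerate simplices. That it holds automatically for every 2-Segal space (hence for every 2-Segal set) is a genuine theorem, due to Feller, Garner, Kock, Proulx, and Weber, proved well after \cite{dk} and by an argument substantially more involved than anything else in your proof. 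So the correct form of your unit step is to invoke that theorem explicitly (or to note that the proposition, as cited from \cite{dk}, tacitly carries unitality as a hypothesis); with that citation made definite rather than optional, your proof is complete.
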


Let us compute this algebra for examples arising from graphs.

\begin{example}
	Let $G$ be the graph
	\[ \xymatrix@1{\bullet_a \ar@{-}[r] & \bullet_b.} \]
	Then $\mathcal H(X^G)$ is 5-dimensional, spanned by $\varnothing$, $a$, $b$, $a \amalg b$, and $G$.  The only nontrivial multiplication is 
	\[ a \cdotp b = (a \amalg b) + G = b \cdotp a. \]
\end{example}

\begin{example}
	Let $G$ be the graph with a single vertex $a$ and a single loop at that vertex.  Then $\mathcal H(X^G)$ is 3-dimensional, spanned by $\varnothing$, $a$, and $G$.  There is no nontrivial multiplication, since there are no disjoint subgraphs.
\end{example}

If we consider 2-Segal sets associated to graphs, we get the following properties on their Hall algebras.

\begin{prop}\label{thm:graph-hall}
	Let $X^G$ be the 2-Segal set associated to a finite graph $G$.  The associated Hall algebra $\mathcal H^G=\mathcal H(X^G)$ is characterized as follows.
	\begin{enumerate}
		\item The basis element corresponding to $\varnothing$ is the multiplicative identity, so $H \cdotp \varnothing = H = \varnothing\cdotp H$ for every subgraph $H$ of $G$.
		
		\item If $H \cap K \neq \varnothing$, then $H \cdotp K = 0$.
		
		\item If $a$ and $b$ are distinct vertices of $G$, then 
		\[ a \cdotp b = \sum_H H \]
		where $H$ is a graph for which $v(H) = \{a, b\}$.  If there are $n$ edges between $a$ and $b$ in $G$, then there are $2^n$ such subgraphs $H$.
		
		\item More generally, if $H \cap K = \varnothing$ for two nonempty subgraphs $H$ and $K$ of $G$, then 
		\[ H \cdotp K = \sum_J J \] 
		where $J$ is a graph such that:
		\begin{itemize}
			\item the disjoint union $H \amalg K$ as a subgraph; and
			
			\item either $J= H \amalg K$ or the difference between $J$ and $H \amalg K$ consists of edges connecting vertices of $H$ with vertices of $K$.
		\end{itemize}
		
		\item \label{comm} It is commutative, so $H \cdotp K = K \cdotp H$ for any subgraphs $H$ and $K$ of $G$.
	\end{enumerate}
\end{prop}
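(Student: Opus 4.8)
The plan is to compute the structure constants $g^z_{xy}$ explicitly from the simplicial structure of $X^G$, and then read off all five properties as special cases of a single multiplication formula. First I would unwind the three face maps on a $2$-simplex. A $2$-simplex of $X^G$ is a datum $(H; S_1, S_2)$, where $H$ is a subgraph of $G$ and $S_1, S_2$ form an ordered partition of its vertex set $V(H)$. As the illustrations of the simplicial structure show, the face maps are $d_1(H; S_1, S_2) = H$ (merging the two blocks), $d_2(H; S_1, S_2) = H|_{S_1}$, and $d_0(H; S_1, S_2) = H|_{S_2}$, where $H|_S$ denotes the subgraph of $H$ induced on a vertex subset $S$, keeping exactly those edges of $H$ with both endpoints in $S$.

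The key step is the computation of the structure constants. By definition $g^z_{xy}$ counts the $2$-simplices $\sigma = (H; S_1, S_2)$ with $d_2(\sigma)=x$, $d_1(\sigma)=z$, and $d_0(\sigma)=y$. Now $d_1(\sigma)=z$ forces $H=z$; then $d_2(\sigma)=x$ forces $S_1 = V(x)$ together with $z|_{V(x)} = x$, and $d_0(\sigma)=y$ forces $S_2 = V(y)$ together with $z|_{V(y)} = y$. Hence $\sigma$ is completely determined by $x,y,z$, so $g^z_{xy} \in \{0,1\}$, and it equals $1$ precisely when $V(x)$ and $V(y)$ are disjoint with union $V(z)$ and the induced subgraphs satisfy $z|_{V(x)}=x$ and $z|_{V(y)}=y$. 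Consequently, when $V(x) \cap V(y) = \varnothing$,
\[ x \cdotp y = \sum_{z} z, \]
the sum ranging over all subgraphs $z$ with $V(z) = V(x) \cup V(y)$, $z|_{V(x)} = x$, and $z|_{V(y)} = y$; when $V(x) \cap V(y) \neq \varnothing$ no such $\sigma$ exists and the product is $0$. I would then observe that such a $z$ is obtained from the disjoint union $x \amalg y$ by adjoining an arbitrary subset $F$ of the edges of $G$ joining a vertex of $V(x)$ to a vertex of $V(y)$: these cross-edges are exactly the ones left unconstrained by the two induced-subgraph conditions, while all edges internal to $V(x)$ or to $V(y)$ are rigidly fixed to be those of $x$ and $y$. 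Distinct choices of $F$ yield distinct $z$.

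With this formula in hand the five assertions follow at once. Part (2) is the vanishing when $V(H) \cap V(K) \neq \varnothing$ (equivalently $H \cap K \neq \varnothing$, since two subgraphs meet exactly when they share a vertex). Part (4) is the displayed formula rewritten for $H, K$, the summands $J$ being $H \amalg K$ together with any subset of cross-edges. Part (3) is the case of single vertices $x=a$, $y=b$: here $V(x)=\{a\}$, $V(y)=\{b\}$, the unconstrained cross-edges are precisely the $n$ edges of $G$ between $a$ and $b$, giving $2^n$ choices of $F$ and hence $2^n$ summands $H$ with $v(H)=\{a,b\}$. Part (1) is the case $y=\varnothing$ (or $x=\varnothing$): there are then no cross-edges, so the unique summand is $H$ itself, whence $H \cdotp \varnothing = H = \varnothing \cdotp H$. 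Finally, part (5) is the symmetry of the formula: since $G$ is undirected, the edges between $V(x)$ and $V(y)$ are the same as those between $V(y)$ and $V(x)$, so $x \cdotp y$ and $y \cdotp x$ are sums over the same set of subgraphs.

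The argument is a direct unwinding, so there is no deep obstacle; the one point demanding care is the structure-constant computation, and in particular verifying the rigidity that makes every $g^z_{xy}$ equal to $0$ or $1$: one must check that the induced-subgraph conditions pin down everything internal to each part, leaving only the cross-edges free. Getting that rigidity right is exactly what yields the binary coefficients and the clean count $2^n$ in part (3).
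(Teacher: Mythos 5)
Your proof is correct: the structure constants $g^z_{xy}$ are indeed $0$ or $1$, with $g^z_{xy}=1$ exactly when $V(z)=V(x)\amalg V(y)$, $z|_{V(x)}=x$, and $z|_{V(y)}=y$, and all five parts follow from that single formula. The paper states this proposition without giving a proof (it is expository, deferring to the cited construction), and your direct unwinding of the face maps---consistent with the paper's cutting/merging diagrams and its two worked examples---is exactly the intended argument, so there is no divergence to report.
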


Let us now consider the examples from trees.

\begin{example}
	Consider again the graph
	\[ \xymatrix@1{\bullet_a \ar@{-}[r] & \bullet_b} \]
	but now as a tree $T$ rooted at the vertex $a$. Now $\mathcal H(X^T)$ is 4-dimensional, spanned by $\varnothing$, $a$, $b$, and $G$, since $a \amalg b$ is not an admissible subforest.  The only nontrivial multiplication is 
	\[ a \cdotp b = G. \]
	Note that $b \cdotp a =0$ here, due to the directionality of the tree.
\end{example}

In comparison with Hall algebras from graphs, we have the following properties.

\begin{prop} 
	Let $T$ be a labeled tree and $X^T$ its associated 2-Segal set.  Then its Hall algebra $\mathcal H(X^T)$ has the following properties.
	\begin{itemize}
		\item If $H \cap K \neq \varnothing$, then $H \cdotp K = 0$.
	
		\item If $H \cap K = \varnothing$, then 
		\[ H \cdotp K = \sum_J J, \]
		where $J$ is tree with an admissible cut between $H$ and $K$.
	
		\item As before, the coefficients are always either 0 or 1.
	
		\item However, $\mathcal H(X^T)$ is not commutative unless $T$ is the trivial tree with a root and no edges.
	\end{itemize}
\end{prop}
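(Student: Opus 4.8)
The plan is to unwind the Hall product of $X^T$ in terms of the face maps of its $2$-simplices, read off the first three properties directly, and use the injectivity of the Segal map from the preceding proposition for the one point that needs it; the noncommutativity is then witnessed by a single explicit pair of subforests.

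First I would pin down the $2$-simplices and their faces. An element of $X_2^T$ is a pair $(J;L)$ consisting of an admissible subforest $J$ and a single cut, recorded by a lower subforest $L$ of $J$; writing $J \ominus L$ for the upper forest left after deleting $L$, the faces are $d_2(J;L)=L$, $d_0(J;L)=J\ominus L$, and $d_1(J;L)=J$ (consistent with $a\cdotp b=G$ and $b\cdotp a=0$ in the preceding example). Unwinding the definition of the product then gives
\[ H\cdotp K=\sum_J g^J_{HK}\,J, \]
where $g^J_{HK}$ counts the cuts $L$ of $J$ whose lower part is $H$ and whose upper part is $K$; that is, $g^J_{HK}$ counts the ways of realizing $J$ by gluing $H$ beneath $K$ along an admissible cut.

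With this dictionary the first three properties are immediate. Since the lower and upper parts of a cut partition the vertices of $J$, they are disjoint subforests of $T$; hence if $H\cap K\neq\varnothing$ there is no contributing $2$-simplex and $H\cdotp K=0$, giving the first property. For a fixed target $J$, the requirement $d_2(J;L)=H$ forces $L=H$, so at most one cut contributes and $g^J_{HK}\in\{0,1\}$, giving the third. The second property is now the statement of the product in the case $H\cap K=\varnothing$; moreover, by the injectivity of $(d_0,d_2)\colon X_2^T\to X_1^T\times X_1^T$ from the preceding proposition, the pair $(K,H)$ is realized by at most one $2$-simplex, so at most one $J$ occurs and the sum reduces to a single term (or is empty). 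This rigidity—one gluing rather than the several seen for graphs—is exactly what the injectivity buys.

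For the last property, assume $T$ has at least one edge and let $r$ be its root. Put $H=\{r\}$, the admissible subforest consisting of the root alone, and $K=T\ominus\{r\}$, the nonempty upper forest obtained by cutting just above the root. The $2$-simplex $(T;\{r\})$ shows $H\cdotp K=T\neq0$. Conversely, any $2$-simplex contributing to $K\cdotp H$ would require a cut with lower part $K$ and upper part $\{r\}$; then $r$ sits in the upper part, so the lower subforest omits the root and must be empty, contradicting $K\neq\varnothing$. Thus $K\cdotp H=0$ and $\mathcal H(X^T)$ is noncommutative, while for the single-vertex tree the algebra is spanned by $\varnothing$ and $r$ with $r\cdotp r=0$ and is plainly commutative. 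The only load-bearing external input is the injectivity of the Segal map, so I would be careful to fix the face-map conventions and the meaning of $J\ominus L$ before invoking it; the rest is bookkeeping.
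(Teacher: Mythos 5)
The paper states this proposition without proof (it is a survey statement, imported from \cite{trees}), so there is no in-paper argument to compare yours against; judged on its own, your proof is correct and is the natural one. Your face-map dictionary $d_2(J;L)=L$, $d_1(J;L)=J$, $d_0(J;L)=J\ominus L$ is the right convention --- it is the only one consistent with the paper's computation $a\cdotp b=G$, $b\cdotp a=0$ for the edge rooted at $a$ --- and the first three bullets do follow by exactly the bookkeeping you describe. Your further observation that injectivity of $(d_0,d_2)$ collapses the sum to at most one term is also correct, and it is precisely the contrast with the graph case, where $a\cdotp b=(a\amalg b)+G$ has two terms; this is a genuine (if unstated) sharpening of the second bullet.

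The one step you should tighten is in the noncommutativity argument. From ``$r$ sits in the upper part'' you conclude that the lower subforest omits \emph{the root} and is hence empty, but the root in question must be the root of the ambient simplex $J$, not of $T$: a priori $J$ could be a disconnected admissible subforest whose components have roots other than $r$, and then a nonempty lower subforest of $J$ avoiding $r$ could exist. The fix is one line: any $2$-simplex contributing to $K\cdotp H$ has $d_1=J$ with $V(J)=V(K)\cup\{r\}=V(T)$, and an admissible subforest of $T$ on the full vertex set is $T$ itself (cutting never deletes edges among retained vertices), so $J=T$, its root is $r$, and $K$ would have to be a nonempty lower subtree of $T$ not containing $r$, which is impossible. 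With that line inserted the proof is complete.
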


Since there is an inclusion $X^T \hookrightarrow X^G$, where $G$ is the underlying graph of the tree $T$.  However, the Hall algebra is generally not functorial, so we do not get an induced map $\mathcal H(X^T) \rightarrow \mathcal H(X^G)$.  For such a map to exist, the inclusion map $X^T \hookrightarrow X^G$ would need to be both CULF and relatively Segal, which is not the case, unless $T$ is just a single vertex.  We refer the reader to \cite{trees} for these definitions and more details.

\section{Double categories and the discrete $S_\bullet$-construction}

Both papers \cite{dk} and \cite{gckt} have as a central example the 2-Segal space given by the Waldhausen $S_\bullet$-construction applied to an exact category.  Here, to keep to the setting of 2-Segal sets, we describe a discrete version that was developed in \cite{waldsets}.  Although it is too strict for most examples that arise naturally, it allows for very concrete descriptions of examples.

In \cite{waldsets}, our goal was to identify the most general input for the discrete $S_\bullet$-construction so that the output was a 2-Segal set.  In order to state this result, we need some further definitions.

\begin{definition}
	A \emph{double category} is a category internal to categories.
\end{definition}

In other words, a small double category $\mathcal{D}$ consists of a set of objects $\ob(\mathcal{D})$, a set of horizontal morphisms $\hor(\mathcal{D})$, a set of vertical morphisms $\ver(\mathcal{D})$, and a set of squares $\sq(\mathcal{D})$, and these sets are related by various source, target, identity and composition maps. In particular, $\Hor\mathcal{D} :=(\ob(\mathcal{D}), \hor(\mathcal{D))}$ and $\Ver\mathcal{D}:= (\ob(\mathcal{D}), \ver(\mathcal{D}))$ form categories. We typically denote horizontal morphisms by $\rightarrowtail$ and vertical morphisms by $\twoheadrightarrow$. 

\begin{definition}
	A double category $\mathcal D$ is \emph{pointed} if it is equipped with a distinguished object $\ast$ that is both initial in the horizontal category and terminal in the vertical category.  
\end{definition}

We would like to require squares in a double category to behave similarly to commutative squares that are both pushout and pullback squares, namely bicartesian squares.  Since the two kinds of morphisms in a double category need not both be morphisms in a common ambient category, we can no longer use the language of pushouts and pullbacks, but we can mimic their properties as follows.

\begin{definition}
	A double category is \emph{stable} if every square is uniquely determined by its span of source morphisms and, independently, by its cospan of target morphisms, so that the maps
	\begin{center}
		\begin{tikzpicture}[scale=0.6]
			\def\l{2cm}
			\begin{scope}
				\draw[fill] (0,0) circle (1pt) node (b0){};
				\draw[fill] (-\l,\l) circle (1pt) node (b2){};
				\draw[fill] (-\l,0) circle (1pt) node (b3){};
				\draw[fill] (0,\l) circle (1pt) node (b1){};
				
				\draw[epi] (b1)--node[anchor=west](x01){}(b0);
				\draw[mono] (b2)--node[anchor=south](x12){}(b1);
				\draw[epi] (b2)--node[anchor=east](x23){}(b3);
				\draw[mono] (b3)--node[anchor=north](x03){}(b0);
				
				\draw[twoarrowlonger] (-0.8*\l, 0.8*\l)--(-0.2*\l,0.2*\l);
			\end{scope}
			\draw[|->] (0.1*\l, 0.5*\l)--node[above](op1){}(0.4*\l,0.5*\l);
			\begin{scope}[xshift=1.5*\l]
				\draw[fill] (-\l,\l) circle (1pt) node (b2){};
				\draw[fill] (-\l,0) circle (1pt) node (b3){};
				\draw[fill] (0,\l) circle (1pt) node (b1){};
				
				\draw[mono] (b2)--node[anchor=south](x12){}(b1);
				\draw[epi] (b2)--node[anchor=east](x23){}(b3);
			\end{scope}
			\draw (2.5*\l, 0.5*\l) node{and};
			\begin{scope}[xshift=4.5*\l]
				\draw[fill] (0,0) circle (1pt) node (b0){};
				\draw[fill] (-\l,\l) circle (1pt) node (b2){};
				\draw[fill] (-\l,0) circle (1pt) node (b3){};
				\draw[fill] (0,\l) circle (1pt) node (b1){};
				
				\draw[epi] (b1)--node[anchor=west](x01){}(b0);
				\draw[mono] (b2)--node[anchor=south](x12){}(b1);
				\draw[epi] (b2)--node[anchor=east](x23){}(b3);
				\draw[mono] (b3)--node[anchor=north](x03){}(b0);
				
				\draw[twoarrowlonger] (-0.8*\l, 0.8*\l)--(-0.2*\l,0.2*\l);
			\end{scope}
			
			\draw[|->] (4.6*\l, 0.5*\l)--node[above](op1){}(4.9*\l,0.5*\l);
			\begin{scope}[xshift=6*\l]
				\draw[fill] (0,0) circle (1pt) node (b0){};
				\draw[fill] (-\l,0) circle (1pt) node (b3){};
				\draw[fill] (0,\l) circle (1pt) node (b1){};
				
				\draw[epi] (b1)--node[anchor=west](x01){}(b0);
				\draw[mono] (b3)--node[anchor=north](x03){}(b0);
			\end{scope}
		\end{tikzpicture}
	\end{center}
	are bijections. 
\end{definition}

\begin{example} \label{w2}
	Consider a double category, denoted by $\mathcal W_2$, with objects $ij$ for $0\leq i \leq j\leq 2$, and generated by the following non-identity horizontal and vertical morphisms and squares:
	\[ \begin{tikzpicture}   
		\draw (1,0.3) node(a00){$00$};
		\draw (2,0.3) node(a01){$01$};
		\draw (2,-0.7) node(a11) {$11$};
		\draw (3, -0.7) node(a12){$12$};
		\draw (3, 0.3) node(a02){$02$};
		\draw (3,-1.7) node (a22){$22.$};
		
		\draw[mono] (a00)--(a01);
		\draw[epi] (a01)--(a11);
		\draw[mono] (a11)--(a12);
		\draw[mono] (a01)--(a02);
		\draw[epi] (a12)--(a22);
		\draw[epi] (a02)--(a12);
		
		\draw[twoarrowlonger] (2.2,0.1)--(2.8,-0.5);
	\end{tikzpicture} \]
	Since the only non-identity square is uniquely determined by its pair of sources or by its pair of targets, $\mathcal W_2$ is a stable double category.  Furthermore, we assume that each $ii$ is a given zero object, so that $\mathcal W_2$ is pointed.
\end{example}

The pointed stable double categories are our desired input for the discrete $S_\bullet$-construction.  We denote by $\sdc$ the category of such.

\begin{definition}
	Let $\mathcal C$ be a pointed stable double category.  Let $\mathcal S_n(\mathcal C)$ be the set of diagrams in $\mathcal C$ of the form
	\[ \xymatrix{\ast \ar@{>->}[r] & a_{01} \ar@{>->}[r] \ar@{->>}[d] & a_{02} \ar@{>->}[r] \ar@{->>}[d] & \cdots \ar@{>->}[r] & a_{0n} \ar@{->>}[d] \\
		& \ast \ar@{>->}[r] & a_{12} \ar@{>->}[r] \ar@{->>}[d] & \cdots \ar@{>->}[r] & a_{1n} \ar@{->>}[d] \\
		&& \ast & \ddots & \vdots \ar@{->>}[d] \\
		&&&& \ast. } \].
	Let $\mathcal S_\bullet(\mathcal C)$ be the simplicial set comprised of the sets $\mathcal S_n(\mathcal C)$ for all $n \geq 0$.
\end{definition}

Alternatively, we could define $\mathcal S_n(\mathcal C)$ in terms of suitable functors from generalizations of the double category in Example \ref{w2}, the approach that is taken in \cite{waldsets}.  

\begin{theorem} \cite[4.8]{waldsets}
	If $\mathcal D$ is a pointed stable double category, then the simplicial set $S_\bullet(\mathcal D)$ is a 2-Segal set.
\end{theorem}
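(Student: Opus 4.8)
The plan is to fix $n \geq 3$ and a triangulation $\mathcal T$ of the regular $(n+1)$-gon, and to show the associated 2-Segal map
\[ \mathcal S_n(\mathcal D) \longrightarrow \underbrace{\mathcal S_2(\mathcal D) \times_{\mathcal S_1(\mathcal D)} \cdots \times_{\mathcal S_1(\mathcal D)} \mathcal S_2(\mathcal D)}_{n-1} \]
is a bijection. First I would unpack both sides. An element of $\mathcal S_n(\mathcal D)$ is the staircase of objects $\{a_{ij}\}_{0 \le i \le j \le n}$ with $a_{ii} = \ast$, horizontal monomorphisms $a_{ij} \rightarrowtail a_{i,j+1}$, vertical epimorphisms $a_{ij} \twoheadrightarrow a_{i+1,j}$, and each constituent square a square of $\mathcal D$. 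Using pointedness, one checks that $\mathcal S_0 = \{\ast\}$, that $\mathcal S_1 \cong \ob(\mathcal D)$ (an object $a_{01}$, together with its forced maps to and from $\ast$), and that, since a square whose bottom-left corner is $\ast$ has its left-hand vertical epimorphism forced and is therefore, by stability, uniquely determined by its top horizontal mono, $\mathcal S_2 \cong \hor(\mathcal D)$, with $d_2, d_1, d_0$ returning the sub $a_{01}$, the total $a_{02}$, and the quotient $a_{12}$. The crucial input is that, because $\mathcal D$ is \emph{stable}, every square of $\mathcal D$ --- including the composite rectangle with corners $a_{ij}, a_{ik}, \ast, a_{jk}$ attached to a triangle $\{i<j<k\}$ of the polygon --- is uniquely determined by its source span (equivalently here, by the mono $a_{ij} \rightarrowtail a_{ik}$) and, independently, uniquely determines its target cospan (equivalently, the object $a_{jk}$ together with the epi $a_{ik} \twoheadrightarrow a_{jk}$).

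The heart of the argument is the case of the fan triangulation $\mathcal T_0$ with triangles $\{0,k,k+1\}$ for $1 \le k \le n-1$. For this triangulation the 2-Segal map sends a staircase to the flag of composable monos recorded along its top row,
\[ \ast \rightarrowtail a_{01} \rightarrowtail a_{02} \rightarrowtail \cdots \rightarrowtail a_{0n}, \]
and an element of the codomain is exactly such a flag. I would show this map is a bijection by reconstructing the entire staircase from the flag: for each $0 \le j < k \le n$, the sub-mono $a_{0j} \rightarrowtail a_{0k}$ together with the forced epi $a_{0j} \twoheadrightarrow \ast$ is a source span, so stability produces a unique square whose target cospan defines $a_{jk}$ and the epi $a_{0k} \twoheadrightarrow a_{jk}$. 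Uniqueness of all of these squares, and of the way they paste together, is precisely what stability guarantees, yielding both injectivity and surjectivity. A symmetric argument, running stability through target cospans rather than source spans, handles the opposite fan triangulation.

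To pass from the fan to an arbitrary triangulation $\mathcal T$, I would invoke the standard fact that any two triangulations of a polygon are connected by a finite sequence of diagonal flips, each flip replacing the two triangles of some quadrilateral sub-polygon by the other diagonal. The base case $n=3$ --- the two triangulations $\mathcal T_1, \mathcal T_2$ of the square in \eqref{squares} --- gives, by the reconstruction just described applied in both directions, a bijection $\mathcal S_2 \times_{\mathcal S_1} \mathcal S_2 \cong \mathcal S_3 \cong \mathcal S_2 \times_{\mathcal S_1} \mathcal S_2$; localized to a quadrilateral this is a flip isomorphism $\phi$ of the two codomains satisfying $f_{\mathcal T'} = \phi \circ f_{\mathcal T}$. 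Hence $f_{\mathcal T}$ is a bijection if and only if $f_{\mathcal T'}$ is, and by flip-connectivity every triangulation inherits bijectivity from the fan.

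I expect the main obstacle to be the bookkeeping underlying the base case: verifying that the two determinations supplied by stability --- reconstruction from source spans and reconstruction from target cospans --- are mutually consistent, so that the squares produced for the individual triangles genuinely paste into a single valid staircase in $\mathcal D$ rather than merely a compatible family of isolated squares. Concretely, this amounts to checking that the source span of a composite rectangle is computed from the spans of its constituent unit squares in the expected way, so that the flag reconstruction and the triangle-by-triangle reconstruction agree. Once this coherence is in hand, the combinatorial reduction via flips is routine.
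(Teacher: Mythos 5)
Your proposal is correct, and its engine is the same one the paper's proof sketch runs on: pointedness forces $\mathcal S_0(\mathcal D)=\{\ast\}$, $\mathcal S_1(\mathcal D)\cong\ob(\mathcal D)$, and $\mathcal S_2(\mathcal D)\cong\hor(\mathcal D)$, and stability supplies the unique square filling a prescribed source span or target cospan, which is what lets one rebuild a whole simplex from partial face data. Where you genuinely differ is the global organization. The paper's sketch treats only $n=3$ and the map $(d_0,d_2)$ attached to $\mathcal T_2$ of \eqref{squares}: there the missing object $a_{02}$ is recovered by applying stability to the \emph{target cospan} $a_{03}\twoheadrightarrow a_{13}\leftarrowtail a_{12}$ (one arrow from each given face), with the last mono $a_{01}\rightarrowtail a_{02}$ coming from the pullback-style universal property; the general case is deferred to \cite{waldsets}. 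You instead anchor everything at the fan triangulations, identifying the fan codomain with flags of monos along the top row and rebuilding the staircase from \emph{source spans}, and then transport bijectivity to arbitrary triangulations by diagonal flips --- legitimate because a flip only exchanges the two $n=3$ decompositions of a single quadrilateral, and both of those are bijections by your base case. The pasting consistency you flag as the main obstacle is real but resolves exactly as you predict: by the uniqueness half of stability, the composite of the unique squares built on two composable spans must equal the unique square built on the composite span, so the triangle-by-triangle reconstructions assemble into an honest staircase rather than a loose family of rectangles. As for what each route buys: yours is self-contained modulo the classical flip-connectivity of polygon triangulations; alternatively, you could skip the flip combinatorics entirely, since your two fan computations (at the initial vertex via source spans, and at the final vertex via target cospans together with the dual identification $\mathcal S_2(\mathcal D)\cong\ver(\mathcal D)$) are precisely the 1-Segal conditions for the two path spaces of $\mathcal S_\bullet(\mathcal D)$, so the Path Space Criterion (Theorem \ref{thm:PathSpaceCriterion}) would then finish the proof directly, which is more in line with the machinery the paper itself develops.
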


In fact, this 2-Segal set is \emph{reduced}, in the sense that its set of 0-simplices has a single element.  We denote the category of reduced Segal sets by $2\Seg_*$.

\begin{proof}[Idea of the proof]
	Let us consider the 2-Segal map
	\[ (d_0, d_2) \colon \mathcal S_3(\mathcal C) \rightarrow \mathcal S_2(\mathcal C) \times_{\mathcal S_1(\mathcal C)} \mathcal S_2(\mathcal C). \]
	Given a 3-simplex
	\[ \xymatrix{\ast \ar@{>->}@[red][r] & a_{01} \ar@{>->}[r] \ar@{->>}@[red][d] \ar@/^1pc/@[red][rr] & a_{02} \ar@{>->}[r] \ar@{->>}[d] &  a_{03} \ar@{->>}@[red][d] \\
		& \ast \ar@{>->}@[blue][r] \ar@/^1pc/@[red][rr] & a_{12} \ar@{>->}@[blue][r] \ar@{->>}@[blue][d]& a_{13} \ar@{->>}@[blue][d] \ar@/^1pc/@[red][dd] \\
		&& \ast \ar@{>->}@[blue][r]  & a_{23} \ar@{->>}@[blue][d] \\
		&&& \ast } \]
	the part of the diagram depicted in blue is the image of the map $d_0$, which removes the top row.  The image of the map $d_2$ is the 2-simplex depicted in red.  To recover the whole 3-simplex from the red and blue portions, it suffices to fill in the object $a_{02}$ and the arrows into and out of it.  We can use stability to identify the object $a_{02}$ filling out the top right-hand square; the analogue to the universal property for pullbacks in this setting provides the map $a_{01} \rightarrowtail a_{02}$.
\end{proof}

We now consider a functor in the reverse direction, taking a reduced 2-Segal set to a pointed stable double category.  We begin by recalling the following endofunctors of $\Delta$:
\[ \begin{aligned}
	i \colon \Delta  \to & \Delta\\
	[n]  \mapsto & [0] \ast [n] 
\end{aligned}
\qquad \text{and} \qquad
\begin{aligned}
	f \colon \Delta  \to & \Delta\\
	[n] \mapsto & [n] \ast [0],
\end{aligned} \]
where $\ast$ denotes the join operation, which here is concatentation of linearly ordered sets.  The functor names are meant to suggest adjoining an initial and final object, respectively, and in particular, both $[0] \ast [n]$ and $[n] \ast [0]$ are precisely the object $[n+1]$.  More specifically, there are natural transformations $i \Rightarrow \id_{\Delta}$ and $f\Rightarrow \id_{\Delta}$ induced by the maps $d^0\colon [n] \to [0] \ast [n]=[n+1]$ and $d^{n+1}\colon [n] \to [n] \ast [0]=[n+1]$, respectively.

\begin{definition}
	Given a simplicial set $X$, its \emph{path spaces} are the simplicial sets $P^{\triangleleft}(X)=X\circ i^{\op}$ and $P^\triangleright(X)=X\circ f^{\op}$. 
\end{definition}

The natural transformations above induce maps of simplicial sets 
\[ P^\triangleleft (X) \to X \quad \text{and} \quad  \colon P^\triangleright(X)\to X \]
that are natural in $X$. These simplicial sets are sometimes called \emph{d\'ecalages}, for example in \cite{gckt}.

The following Path Space Criterion from \cite{dk} and \cite{gckt} relates $1$-Segal sets and $2$-Segal sets.

\begin{theorem}\label{thm:PathSpaceCriterion}
	A simplicial set $X$ is $2$-Segal if and only if its path spaces $P^{\triangleleft} X$ and $P^\triangleright X$ are $1$-Segal sets.
\end{theorem}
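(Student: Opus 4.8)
The plan is to prove each direction by identifying the 1-Segal maps of the path spaces with a distinguished family of 2-Segal maps of $X$, namely those coming from \emph{fan} triangulations. Recall that $P^{\triangleleft}(X)_n = X_{n+1}$, and that under this identification the face maps of $P^{\triangleleft}(X)$ are $d_1, \ldots, d_{n+1}$ (the d\'ecalage drops $d_0$). Hence $P^{\triangleleft}(X)_0 = X_1$ and $P^{\triangleleft}(X)_1 = X_2$, and a short computation shows that the $i$-th edge of the spine of $P^{\triangleleft}(X)$ at level $n$ corresponds to the triangle $\{0, i+1, i+2\}$ of $X$. Thus the level-$n$ 1-Segal map of $P^{\triangleleft}(X)$,
\[ X_{n+1} \to \underbrace{X_2 \times_{X_1} \cdots \times_{X_1} X_2}_{n}, \]
is exactly the 2-Segal map of $X$ for the triangulation of the $(n+2)$-gon that fans out from the initial vertex $0$. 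Symmetrically, the level-$n$ 1-Segal map of $P^{\triangleright}(X)$ is the 2-Segal map for the fan from the terminal vertex. Given this dictionary, the forward direction is immediate: if $X$ is 2-Segal then every 2-Segal map is an isomorphism, so in particular the fan maps are, and both path spaces are 1-Segal.

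For the converse, the first observation is that at level $n = 3$ the square in \eqref{squares} has $\mathcal T_1$ equal to the fan from vertex $0$ and $\mathcal T_2$ equal to the fan from vertex $3$; hence the hypothesis that $P^{\triangleleft}(X)$ and $P^{\triangleright}(X)$ are 1-Segal gives precisely that both triangulations of the square induce isomorphisms $X_3 \xrightarrow{\cong} X_2 \times_{X_1} X_2$. Call this the \emph{square condition}; it is a statement internal to $X$ at level $3$, and so applies to every $3$-simplex, in particular to the face $\sigma|_{\{p,q,r,s\}}$ of any higher simplex $\sigma$ on any four vertices $p < q < r < s$.

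The plan is then to promote the single fan isomorphism at each level to all triangulations by a flip argument: the flip graph on triangulations of a convex polygon is connected (a classical fact), and a diagonal flip replaces the two triangles filling a sub-quadrilateral $Q$ with the opposite pair. Coarsening the fiber product by the square condition replaces those two $X_2$-factors (glued along the old diagonal of $Q$) by a single $X_3$-factor attached to the rest along the four boundary edges of $Q$; since this coarsening is available for either triangulation of $Q$ and leaves the gluing to all other triangles untouched, the targets of the two 2-Segal maps related by the flip are canonically isomorphic compatibly with the maps out of $X_{n+1}$. Hence one map is an isomorphism if and only if the other is. Starting from the fan isomorphism supplied by $P^{\triangleleft}(X)$ and traversing the flip graph, we conclude that every 2-Segal map at every level $n \geq 3$ is an isomorphism, so $X$ is 2-Segal. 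Note that $P^{\triangleright}(X)$ enters the converse only at level $2$, to complete the square condition.

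The main obstacle is this locality step: one must check carefully that a flip genuinely affects only the two $X_2$-factors of the sub-quadrilateral, that the remaining fiber product and all of its gluing maps are unchanged, and that the square condition furnishes the replacing isomorphism compatibly with the projection to $X_{n+1}$. Verifying that the coarsened fiber product is well defined---so that the internal diagonal of $Q$ ceases to be a gluing edge while the four boundary edges of $Q$ remain the only attachments to the rest---is the technical heart of the argument, and is where the interplay between the combinatorics of triangulations and the fiber-product description of the 2-Segal maps must be handled with care. Everything else reduces to the bookkeeping identification of spines with fans and the classical connectivity of the flip graph.
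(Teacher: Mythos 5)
First, a point of comparison: the paper does not prove this theorem at all; it states it as the Path Space Criterion and attributes it to \cite{dk} and \cite{gckt}, so your proposal can only be judged on its own merits. On those merits it is correct, and it is in essence the argument of Dyckerhoff and Kapranov: identify the $1$-Segal maps of the path spaces with the $2$-Segal maps of fan triangulations, then propagate to all triangulations via connectivity of the flip graph. Your dictionary is right (with the d\'ecalage conventions you state, the $i$-th spine edge of $\Delta[n]$ is carried by the join functor to the triangle $\{0,i+1,i+2\}$ of $\Delta[n+1]$), your reading of \eqref{squares} as the two fans at level $3$ is right, and your closing remark that $P^{\triangleright}X$ is needed only at level $2$ is also right, so your converse in fact establishes a slightly stronger statement than the theorem asks for.

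Second, the step you flag as the technical heart does close, and more easily than you fear. Let $\Delta_Q\subseteq\Delta[n]$ be the $3$-simplex spanned by the vertices $p<q<r<s$ of the flipped quadrilateral, and let $\mathcal T^Q_1\subseteq\mathcal T$ and $\mathcal T^Q_2\subseteq\mathcal T'$ be its two triangulations. Since the edges of a triangulation are pairwise non-crossing and $qs$ crosses $pr$, the only nondegenerate simplices of $\mathcal T$ with all vertices in $\{p,q,r,s\}$ are the two flipped triangles and their faces; hence $\mathcal T\cap\Delta_Q=\mathcal T^Q_1$ as simplicial subsets of $\Delta[n]$, and so $\mathcal T\cup\Delta_Q$ is the pushout of $\mathcal T\hookleftarrow\mathcal T^Q_1\hookrightarrow\Delta_Q$. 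Applying $\Hom(-,X)$ turns this pushout into a pullback, and your square condition says exactly that $\Hom(\Delta_Q,X)\to\Hom(\mathcal T^Q_1,X)$ is an isomorphism; since the pullback of an isomorphism is an isomorphism, $\Hom(\mathcal T\cup\Delta_Q,X)\to\Hom(\mathcal T,X)$ is an isomorphism, and it commutes with restriction from $X_n=\Hom(\Delta[n],X)$. The same reasoning applies to $\mathcal T'$, and $\mathcal T\cup\Delta_Q=\mathcal T'\cup\Delta_Q$, so the $2$-Segal maps for $\mathcal T$ and $\mathcal T'$ are isomorphisms simultaneously. This is precisely your flip-invariance, and the rest of your argument goes through as written.
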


The idea behind the inverse functor to $\mathcal S_\bullet$ is to build a double category from both path spaces.  If $X$ is a 2-Segal set, then by the above theorem, the path spaces $P^\triangleleft X$ and $P^\triangleright X$ are 1-Segal, and hence correspond to categories that we denote suggestively by $\mathcal H$ and $\mathcal V$.  Both have $X_1$ as the set of objects and $X_2$ as the set morphisms, and our aim is to assemble them to a double category with horizontal category $\mathcal H$ and vertical category $\mathcal V$. The source, target, and identity maps for $\mathcal H$ are given by $d_2$, $d_1$, and $s_1$, respectively, and for $\mathcal V$, they are given by $d_1$, $d_0$, and $s_0$. Composition  in $\mathcal H$ and $\mathcal V$ can be defined by
\[
X_2\times_{X_1} X_2 \underset{\cong}{\xrightarrow{(d_3,d_1)^{-1}}}  X_3 \xrightarrow{d_2}   X_2 \text{ and } X_2\times_{X_1} X_2 \underset{\cong}{\xrightarrow{(d_2,d_0)^{-1}}} X_3 \xrightarrow{d_1}   X_2,
\]
respectively. 

Let $\Sq = X_3$. We define the horizontal source of a square by the face map $d_3$ and its horizontal target by $d_2$, as follows:
\[ \xymatrix{\mor \mathcal V\ar@{=}[d]&\Sq \ar@{-->}[l]_-{s_h}\ar@{-->}[r]^-{t_h}\ar@{=}[d]&\mor \mathcal V \ar@{=}[d]\\X_2&X_3\ar[l]^-{ d_3}\ar[r]_-{ d_2}&X_2.} \]
We can similarly define the vertical source and target of a square using the face maps $d_1$ and $d_0$, respectively:
\[ \xymatrix{\mor \mathcal H \ar@{=}[d]&\Sq \ar@{-->}[l]_-{s_v}\ar@{-->}[r]^-{t_v}\ar@{=}[d]&\mor \mathcal H \ar@{=}[d]\\X_2&X_3\ar[l]^-{ d_1}\ar[r]_-{ d_0}&X_2.} \]
More explicitly, for any $x\in X_3$ whose 2-dimensional faces are $m,m', e,$ and $e'$ as depicted in the diagram
\begin{center}
	\begin{tikzpicture}[scale=0.7]
		\def\l{3cm}
		\begin{scope} 
			\draw[fill] (-\l,0) node (b0){$0$};
			\draw[fill] (0,0)  node (b1){$1$};
			\draw[fill] (0,\l) node (b2){$2$};
			\draw[fill] (-\l,\l)  node (b3){$3$};
			
			\draw (b1)--(b2);
			\draw (b2)--(b3);
			\draw (b3)--(b0);
			\draw (b1)--(b0);
			\draw (b1)--(b3);
			
			\draw[epi] (-0.95*\l, 0.45*\l)-- node[below]{$e'$}(-0.55*\l, 0.45*\l);
			\draw[mono] (-0.05*\l, 0.55*\l)--node[above]{$m'$}(-0.45*\l, 0.55*\l);
		\end{scope}
		
		\begin{scope} [xshift=4cm]
			\draw[fill] (-\l,0) node (b0){$0$};
			\draw[fill] (0,0)  node (b1){$1,$};
			\draw[fill] (0,\l) node (b2){$2$};
			\draw[fill] (-\l,\l)  node (b3){$3$};
			
			\draw (b1)--(b2);
			\draw (b2)--(b3);
			\draw (b3)--(b0);
			\draw (b1)--(b0);
			\draw (b0)--(b2);
			
			\draw[epi] (-0.45*\l, 0.45*\l)-- node[below]{$e$}(-0.05*\l, 0.45*\l);
			\draw[mono] (-0.55*\l, 0.55*\l)--node[above]{$m$}(-0.95*\l, 0.55*\l);
		\end{scope}
		
	\end{tikzpicture}
\end{center}
the corresponding square in $\Sq$ is given by
\begin{center}
	\begin{tikzpicture}
		\def\l{3cm}
		\def\h{3cm}
		\draw (0,0) node (a1){$d_2d_1(x)$};
		\draw (\l,0) node (a2){$d_1d_1(x)$};
		\draw(-0.3*\l,-0.3*\h) node (b1){$d_1d_3(x)$};
		\draw(1.3*\l,-0.3*\h) node (b2){$d_1d_2(x)$};
		\draw(-0.3*\l,-1*\h) node (c1){$d_0d_3(x)$};
		\draw(1.3*\l,-1*\h) node (c2){$d_0d_2(x)$};
		\draw (0,-1.3*\h) node (d1){$d_2d_0(x)$};
		\draw (\l,-1.3*\h) node (d2){$d_1d_0(x).$};
		\draw (0.6*\l, -0.6*\h)node(middle){$x$};
		\draw[twoarrowlonger] (0.3*\l, -0.4*\h)--(0.7*\l, -0.9*\h);
		\draw[mono] (a1)--node[above](u){$m=d_1(x)$} (a2);
		\draw[epi] (b1)--node[left](l){$e=d_3(x)$} (c1);
		\draw[epi] (b2)--node[right](r){$e'=d_2(x)$} (c2);
		\draw[mono] (d1)--node[below](b){$m'=d_0(x)$} (d2);
		\draw[double, double distance=1pt] (a1)--(b1);
		\draw[double, double distance=1pt] (a2)--(b2);
		\draw[double, double distance=1pt] (c1)--(d1);
		\draw[double, double distance=1pt] (c2)--(d2);
	\end{tikzpicture}
\end{center}

\begin{prop} \cite[5.2]{waldsets}
	Let $X$ be a 2-Segal set. Then the above construction defines a stable double category $\mathcal PX$. If $X$ is reduced, then $\mathcal PX$ is pointed.
\end{prop}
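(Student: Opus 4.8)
The plan is to build the horizontal and vertical categories from the two path spaces, to obtain stability directly from the two $2$-Segal maps in dimension three, and to handle pointedness through the d\'ecalage. First I would produce $\mathcal H$ and $\mathcal V$: because $X$ is $2$-Segal, the Path Space Criterion (Theorem~\ref{thm:PathSpaceCriterion}) tells us that $P^\triangleleft X$ and $P^\triangleright X$ are $1$-Segal, and hence, by the characterization of $1$-Segal sets as nerves of categories, they are the nerves of categories $\mathcal H$ and $\mathcal V$. Unwinding the d\'ecalage identifications $(P^\triangleleft X)_n = X_{n+1} = (P^\triangleright X)_n$ shows that the source, target, and identity of $\mathcal H$ are $d_2$, $d_1$, and $s_1$, while those of $\mathcal V$ are $d_1$, $d_0$, and $s_0$, with composition given by the two formulas in the statement. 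This is precisely the horizontal and vertical structure we seek.

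The heart of the argument is stability, which I expect to fall out immediately. For a square $x \in X_3$, the span of its source morphisms is the pair $(d_1 x, d_3 x)$ formed by its top and left edges, and the cospan of its target morphisms is $(d_0 x, d_2 x)$ formed by its bottom and right edges; the simplicial identities $d_2 d_1 = d_1 d_3$ and $d_1 d_0 = d_0 d_2$ ensure that each pair meets at the required corner, so that both lie in $X_2 \times_{X_1} X_2$. But the resulting maps
\[ (d_1, d_3) \colon X_3 \to X_2 \times_{X_1} X_2 \qquad\text{and}\qquad (d_0, d_2) \colon X_3 \to X_2 \times_{X_1} X_2 \]
are exactly the $2$-Segal maps attached to the triangulations $\mathcal T_1$ and $\mathcal T_2$ of~\eqref{squares}, hence isomorphisms by hypothesis. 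This says precisely that every square is uniquely determined by its source span and, independently, by its target cospan, which is stability.

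With stability available I would assemble the full double category. The four face maps $d_0, \dots, d_3 \colon X_3 \to X_2$ supply the bounding edges of a square, and the simplicial identities make the four corners well defined and the source/target data compatible; the degenerate $3$-simplices serve as the horizontal and vertical identity squares. To compose two squares one first writes down the boundary of the intended composite --- applying composition in $\mathcal H$ (respectively $\mathcal V$) to the two parallel edges and keeping the outer transverse edges --- and then takes the unique filling square provided by stability, that is, by inverting $(d_1, d_3)$ or $(d_0, d_2)$. The remaining task, which I would treat as a routine but careful verification, is to check that these composites are associative and unital and satisfy the interchange law. This is the step I expect to be the genuine obstacle: after inverting the $2$-Segal maps each such identity reduces to the corresponding identity in $\mathcal H$ or $\mathcal V$ together with the uniqueness of stable fillers, so no new idea is needed, but the bookkeeping of faces is delicate. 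Granting this, $\mathcal PX$ is a stable double category.

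Finally, suppose $X$ is reduced, so that $X_0 = \{x_0\}$, and put $\ast = s_0 x_0 \in X_1$. The d\'ecalage $P^\triangleleft X$ carries an extra degeneracy, induced by the operator $s_0$ of $X$, and augments to the one-point set $X_0$; the usual relation between an extra degeneracy and an initial object in a nerve then shows that $\ast$ is initial in $\mathcal H$. Concretely, for each $a \in X_1$ the degenerate simplex $s_0 a$ satisfies $d_2 s_0 a = \ast$ and $d_1 s_0 a = a$, so it is a morphism $\ast \to a$, and the extra degeneracy forces it to be the only one. Dually, the extra degeneracy of $P^\triangleright X$ makes $\ast$ terminal in $\mathcal V$, with $s_1 a$ the unique morphism $a \to \ast$. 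Since $\ast$ is thus initial in the horizontal category and terminal in the vertical category, $\mathcal PX$ is pointed.
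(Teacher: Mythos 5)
Your proposal follows the same route as the paper's construction (which is spelled out just before the proposition, with the verification deferred to \cite{waldsets}): the categories $\mathcal H$ and $\mathcal V$ come from the two path spaces via Theorem \ref{thm:PathSpaceCriterion}; the squares are $X_3$ with $d_3,d_2,d_1,d_0$ as horizontal source/target and vertical source/target; stability is exactly the bijectivity of the dimension-3 2-Segal maps $(d_1,d_3)$ and $(d_0,d_2)$ attached to the triangulations $\mathcal T_1$ and $\mathcal T_2$ of \eqref{squares}; and pointedness comes from the degeneracies, with $s_0a$ the unique horizontal morphism $\ast\rightarrowtail a$ and $s_1a$ the unique vertical morphism $a\twoheadrightarrow\ast$. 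Your identification of stability with the dimension-3 conditions (including the simplicial identities you cite) and your extra-degeneracy argument for initiality and terminality of $\ast=s_0x_0$ are both correct.

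The one step you underestimate is the assembly of the double category, specifically your claim that all axioms reduce to ``the corresponding identity in $\mathcal H$ or $\mathcal V$ together with the uniqueness of stable fillers.'' Define the horizontal composite of squares $x,y$ (with $d_2x=d_3y$) as the filler $z$ with $d_3z=d_3x$ and $d_1z=d_1y\circ_{\mathcal H}d_1x$; you must then identify the remaining two faces of $z$. One of them does reduce as you say: every $\sigma\in X_3$, viewed as a 2-simplex of the nerve $P^\triangleleft X$ of $\mathcal H$, satisfies $d_2\sigma=d_1\sigma\circ_{\mathcal H}d_3\sigma$, so associativity in $\mathcal H$ gives $d_2z=d_2y$. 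But the identity $d_0z=d_0y\circ_{\mathcal H}d_0x$ is a mixed compatibility between the two path-space structures, and neither category's internal axioms nor filler uniqueness produces it; here the 2-Segal conditions above dimension 3 must enter. Concretely, let $w\in X_4$ be the 3-simplex of the nerve of $\mathcal H$ corresponding to the composable triple $(d_3x,\,d_1x,\,d_1y)$; uniqueness of nerve 2-simplices gives $d_4w=x$, $d_2w=y$, $d_3w=z$, and then the simplicial identities $d_0d_3=d_2d_0$, $d_0d_2=d_1d_0$, $d_0d_4=d_3d_0$ together with the nerve relation applied to $d_0w$ yield $d_0z=d_2(d_0w)=d_1(d_0w)\circ_{\mathcal H}d_3(d_0w)=d_0y\circ_{\mathcal H}d_0x$. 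Once this boundary lemma is in place, your uniqueness argument really does dispose of associativity and interchange (the competing composites have the same source span, hence coincide by stability), so the gap is localized---but it is a genuine step, and it is the part of the proof of \cite[5.2]{waldsets} that takes actual work rather than bookkeeping.
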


\begin{theorem} \cite[6.1]{waldsets}
	The functors $S_\bullet$ and $\mathcal P$ define an equivalence of categories 
	\[ S_\bullet \colon \sdc_* \overset{\simeq}\leftrightarrow 2\Seg_* \colon \mathcal P. \]
\end{theorem}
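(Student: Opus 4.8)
The plan is to exhibit $S_\bullet$ and $\mathcal P$ as mutually inverse by constructing natural isomorphisms $S_\bullet \mathcal P \cong \id_{2\Seg_*}$ and $\mathcal P S_\bullet \cong \id_{\sdc_*}$. That both functors are well-defined is already in hand: $S_\bullet \mathcal D$ is a reduced 2-Segal set, and $\mathcal P X$ is a pointed stable double category, while functoriality on morphisms (applying a double functor levelwise to staircase diagrams, respectively applying a simplicial map to $X_1, X_2, X_3$) is routine. The two structural inputs I will lean on throughout are the Path Space Criterion (Theorem \ref{thm:PathSpaceCriterion}), which makes $P^\triangleleft X$ and $P^\triangleright X$ into 1-Segal sets and hence identifies them with the nerves of the horizontal and vertical categories $\mathcal H$ and $\mathcal V$ of $\mathcal P X$, together with the stability of $\mathcal P X$.

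For the isomorphism $S_\bullet \mathcal P X \cong X$, I would produce a bijection $\mathcal S_n(\mathcal P X)\cong X_n$ for each $n$ as follows. A staircase diagram in $\mathcal P X$ is determined by its top row $\ast \rightarrowtail a_{01}\rightarrowtail\cdots\rightarrowtail a_{0n}$: processing the squares from left to right and top to bottom, stability lets each square be reconstructed from its source span, so the whole diagram is forced once the top row is fixed. Thus $\mathcal S_n(\mathcal P X)$ is in bijection with the set of length-$n$ chains in $\mathcal H$ beginning at $\ast$. Since $\mathcal P X$ is pointed, $\ast$ is initial in $\mathcal H$, so the first arrow is unique and such a chain is the same datum as an arbitrary length-$(n-1)$ chain, i.e.\ an element of $\nerve(\mathcal H)_{n-1}$. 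Finally the Path Space Criterion gives $\nerve(\mathcal H)\cong P^\triangleleft X$, whence $\nerve(\mathcal H)_{n-1}\cong (P^\triangleleft X)_{n-1}= X_n$. Composing these identifications yields $\mathcal S_n(\mathcal P X)\cong X_n$, and I would then check that they commute with the face and degeneracy maps and are natural in $X$.

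For the isomorphism $\mathcal P S_\bullet \mathcal D \cong \mathcal D$, the point is to unwind the low-dimensional simplices of $S_\bullet\mathcal D$. Since $\ast$ is horizontally initial, $\mathcal S_1(\mathcal D)\cong\ob(\mathcal D)$; a diagram in $\mathcal S_2(\mathcal D)$ is, by stability together with the forced morphisms to and from $\ast$, determined equally by its one free horizontal morphism or its one free vertical morphism, giving $\mathcal S_2(\mathcal D)\cong\hor(\mathcal D)\cong\ver(\mathcal D)$ (this double identification is exactly the datum that lets $\mathcal P S_\bullet\mathcal D$, whose horizontal and vertical morphisms share the single set $(S_\bullet\mathcal D)_2$, reconstruct a genuine double category); and similarly $\mathcal S_3(\mathcal D)\cong\sq(\mathcal D)$, the interior square being free and the boundary squares forced. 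I would then verify that under these bijections the face maps $d_2, d_1, s_1$ and $d_1, d_0, s_0$ defining the sources, targets, and identities of $\mathcal P S_\bullet\mathcal D$ recover those of $\mathcal D$, and that the composites $d_2(d_3,d_1)^{-1}$ and $d_1(d_2,d_0)^{-1}$ recover horizontal and vertical composition.

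I expect the main obstacle to be the bookkeeping that upgrades the levelwise bijections above into genuine isomorphisms of the relevant structured objects: for the first direction, checking compatibility with all face and degeneracy maps (so that $S_\bullet(\mathcal P X)\cong X$ as simplicial sets, not merely in each degree), and for the second, checking that horizontal and vertical composition as well as the stability bijections are preserved. Since composition in $\mathcal P X$ is defined through inverses of Segal-type maps, tracing it through the stability reconstruction is where care is required. Naturality in $X$ and in $\mathcal D$ should then follow formally, completing the equivalence.
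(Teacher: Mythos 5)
The paper itself contains no proof of this statement---it is quoted directly from \cite[6.1]{waldsets}, and the surrounding text only constructs the two functors and defers to that reference---so the only meaningful comparison is with the strategy of the cited source, and your outline is essentially that strategy: exhibit $S_\bullet$ and $\mathcal P$ as mutually inverse via explicit natural isomorphisms, using stability plus pointedness to show that a staircase diagram is forced by its top row (yielding $\mathcal S_n(\mathcal P X)\cong \nerve(\mathcal H)_{n-1}\cong (P^{\triangleleft}X)_{n-1}=X_n$), and that $\mathcal S_1(\mathcal D)\cong\ob(\mathcal D)$, $\mathcal S_2(\mathcal D)\cong\hor(\mathcal D)\cong\ver(\mathcal D)$, $\mathcal S_3(\mathcal D)\cong\sq(\mathcal D)$ recover the double category. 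Your two reconstruction arguments are correct as stated, including the observation that the middle square of a $3$-simplex is free while the two outer squares are forced. Do be aware that what you label as bookkeeping---compatibility of the levelwise bijections with all faces and degeneracies, and the check that horizontal and vertical composition, defined through the inverses $(d_3,d_1)^{-1}$ and $(d_2,d_0)^{-1}$, are preserved---is where nearly all of the length of the proof in \cite{waldsets} lies, so your proposal should be regarded as a sound plan rather than a complete argument.
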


In fact, in that paper we prove a more general of this theorem, where we show that the category of all 2-Segal sets is equivalent to the category of \emph{augmented} stable double Segal spaces.  For simplicity, we do not discuss the augmented case here, but refer the interested reader to \cite{waldsets}.

Let us now look at some of our examples of 2-Segal sets from earlier in the paper and describe their corresponding pointed stable double categories.

\begin{example}
	Recall the 2-Segal set $M_\bullet$ given by the nerve of a partial monoid.  Using the arguments above, the objects of the associated double category $\mathcal P M$ are the elements of the monoid $a\in M$. The sets of horizontal and vertical morphisms are both given by the set of composable pairs $M_2$, but structure is different.  For $(a,b) \in M_2$, the corresponding horizontal arrow has source $a$ and target $a\cdot b$,  so can be interpreted as multiplication on the right by $b$:
	\[ \begin{tikzcd}
		a \arrow[tail]{r}{\cdot b} & a\cdot b.
	\end{tikzcd} \]
	On the other hand, the vertical arrow corresponding to $(a,b)\in M_2$ has target $b$ and source $a\cdot b$, so can be understood as multiplication on the left by $a$, but with the arrow pointing in the other direction:
	\[ \begin{tikzcd}
		a\cdot b \arrow[two heads, swap]{d}{a \cdot} \\  b.
	\end{tikzcd} \]
	The set of squares is given by $M_3$ and reflects associativity of the multiplication: for $(a,b,c)\in M_3$, its associated square is
	\[ \begin{tikzcd}
		a\cdot b \arrow[two heads, swap]{d}{a \cdot} \arrow[tail]{r}{\cdot c} & (a\cdot b) \cdot c = a\cdot (b\cdot c) \arrow[two heads]{d}{a \cdot}  \\
		b \arrow[tail, swap]{r}{\cdot c} & b\cdot c.
	\end{tikzcd} \]
	The double category $\mathcal M$ is pointed by the unit $1\in M$, since for every $a\in M$, the elements $(1,a)\in M_2$ and $(a,1)\in M_2$, which can be visualized as
	\[ \begin{tikzcd}
		1 \arrow[tail]{r}{\cdot a} & 1\cdot a = a
	\end{tikzcd}
	\hspace{1cm}\mbox{and}\hspace{1cm}
	\begin{tikzcd}
		a=a\cdot 1 \arrow[two heads, swap]{d}{a \cdot} \\  1,
	\end{tikzcd} \]
	exhibit 1 as initial with respect to the horizontal category and terminal with respect to the vertical category.  Finally, stability is again given by associativity, since both
	\[ \begin{tikzcd}
		a\cdot b \arrow[two heads, swap]{d}{a \cdot} \arrow[tail]{r}{\cdot c} & (a\cdot b) \cdot c   \\
		b 
	\end{tikzcd}
	\quad\mbox{and}\quad
	\begin{tikzcd}
		\mbox{} & a\cdot (b\cdot c) \arrow[two heads]{d}{a \cdot}  \\
		b \arrow[tail, swap]{r}{\cdot c} & b\cdot c
	\end{tikzcd} \]
	can be completed uniquely to a square as above.
\end{example}

\begin{example} 
	We apply our construction to the 2-Segal set $X$ associated to a graph $G$.
	
	An element in $X_2$, such as 
	\[\begin{tikzpicture}
		\def\l{1cm}
		\draw (\l,0) node [vnode=$a$](a){};
		\draw (2*\l,0) node [vnode=$b$](b){};
		\draw (\l,-0.3*\l) node[vellipsefirstone=0.8*\l]{};
		\draw (2*\l, -0.3*\l) node[vellipsesecondone=0.8*\l]{};
		\draw (a)--(b);
	\end{tikzpicture} \]
	represents a horizontal morphism
	\begin{center}
		\begin{tikzpicture}
			\def\l{1cm}
			\draw (\l,0) node [vnode=$a$](a){};
			\draw (\l,-0.3*\l) node[vellipsefirstone=0.8*\l]{};
			
			\draw[mono] (1.5*\l,-0.3*\l) -- (5.5*\l,-0.3*\l);
			
			\begin{scope}[yshift=0.8*\l, xshift=3*\l]
				\draw (0,0) node [vnode=$a$](a){};
				\draw (\l,0) node [vnode=$b$](b){};
				\draw (0,-0.3*\l) node[vellipsefirstone=0.8*\l]{};
				\draw (\l, -0.3*\l) node[vellipsesecondone=0.8*\l]{};
				\draw (a)--(b);
			\end{scope}
			
			\begin{scope}[xshift=5*\l]
				\draw (\l,0) node [vnode=$a$](a){};
				\draw (2*\l,0) node [vnode=$b.$](b){};
				\draw (a)--(b);
				\draw (1.5*\l, -0.3*\l) node[vellipsefirsttwo=0.8*\l]{};
			\end{scope}
			
		\end{tikzpicture}
	\end{center}
	
	The same element gives the vertical morphism
	\begin{center}
		\begin{tikzpicture}[scale=0.75]
			\def\l{1cm}
			\def\w{6cm}
			\def\h{-5cm}
			
			\begin{scope}
				\draw (\l,0) node [vnode=$a$](a){};
				\draw (2*\l,0) node [vnode=$b$](b){} ++(0.5*\l,-0.5*\l) node (t1) {};
				\draw (1.5*\l,-0.3*\l) node[vellipsefirsttwo=0.8*\l]{};
				\draw (a)-- node (l11) {} (b);
				\path (l11) ++(0,-\l) node (l1) {};
			\end{scope}
			
			\begin{scope}[shift={(0.5, \h)}]
				\draw (\l,0) node [vnode=$b.$](b){} node[anchor=south, yshift=0.2cm] (l2) {} ++(0.5*\l,-0.5*\l) node (b1) {};
				\draw (\l,-0.3*\l) node[vellipsefirstone=0.8*\l]{};
			\end{scope}
			
			\draw[epi] (l1) -- (l2);
			\begin{scope}[shift={(-0.2*\l, 0.5*\h)}]
				\draw (0,0) node [vnode=$a$](a){};
				\draw (\l,0) node [vnode=$b$](b){};
				\draw (0,-0.3*\l) node[vellipsefirstone=0.8*\l]{};
				\draw (\l, -0.3*\l) node[vellipsesecondone=0.8*\l]{};
				\draw (a)--(b);
			\end{scope}
		\end{tikzpicture}
	\end{center}
	
	The 3-simplex
	\[ \begin{tikzpicture}
		\def\l{1cm}
		\draw (\l,0) node [vnode=$a$](a){};
		\draw (2*\l,0) node [vnode=$b$](b){};
		\draw (3*\l,0) node [vnode=$c$](c){};
		\draw (\l,-0.3*\l) node[vellipsefirstone=0.8*\l]{};
		\draw(2*\l, -0.3*\l) node[vellipsesecondone=0.8*\l]{};
		\draw (3*\l, -0.3*\l) node[vellipsethirdone=0.8*\l]{};
		\draw (a)--(b);
		\draw (b)--(c);
	\end{tikzpicture} \]
	gives rise to a square
	\begin{center}
		\begin{tikzpicture}[scale=0.75]
			\def\l{1cm}
			\def\w{6cm}
			\def\h{-5cm}
			
			\begin{scope}
				\draw (\l,0) node [vnode=$a$](a){};
				\draw (2*\l,0) node [vnode=$b$](b){} ++(0.5*\l,-0.5*\l) node (t1) {};
				\draw (1.5*\l,-0.3*\l) node[vellipsefirsttwo=0.8*\l]{};
				\draw (a)-- node (l11) {} (b);
				\path (l11) ++(0,-\l) node (l1) {};
			\end{scope}
			
			\begin{scope}[xshift = \w]
				\draw (\l,0) node [vnode=$a$](a){} ++(-0.8*\l,-0.5*\l) node (t2) {};
				\draw (2*\l,0) node [vnode=$b$](b){} ++(0,-\l) node (r1) {};
				\draw (3*\l,0) node [vnode=$c$](c){};
				\draw (2*\l, -0.3*\l) node[vellipsefirstthree=0.8*\l]{};
				\draw (a)--(b);
				\draw (b)--(c);
			\end{scope}
			
			\draw[mono] (t1) -- node (T) {} (t2);
			\begin{scope}[shift={(0.5*\w++0.5*\l, 1.2*\l)}]
				\draw (0,0) node [vnode=$a$](a){};
				\draw (\l,0) node [vnode=$b$](b){};
				\draw (2*\l,0) node [vnode=$c$](c){};
				\draw (b)--(c);
				\draw (a)--(b);
				\draw (0.5*\l,-0.3*\l) node[vellipsefirsttwo=0.8*\l]{};
				\draw (2*\l, -0.3*\l) node[vellipsesecondone=0.8*\l]{};
			\end{scope}
			
			\begin{scope}[shift={(0.5, \h)}]
				\draw (\l,0) node [vnode=$b$](b){} node[anchor=south, yshift=0.2cm] (l2) {} ++(0.5*\l,-0.5*\l) node (b1) {};
				\draw (\l,-0.3*\l) node[vellipsefirstone=0.8*\l]{};
			\end{scope}
			
			\draw[epi] (l1) -- (l2);
			\begin{scope}[shift={(-0.2*\l, 0.5*\h)}]
				\draw (0,0) node [vnode=$a$](a){};
				\draw (\l,0) node [vnode=$b$](b){};
				\draw (0,-0.3*\l) node[vellipsefirstone=0.8*\l]{};
				\draw (\l, -0.3*\l) node[vellipsesecondone=0.8*\l]{};
				\draw (a)--(b);
			\end{scope}
			
			\begin{scope}[shift={(\w ++ 0.5cm, \h)}]
				\def\l{1cm}
				\draw (\l,0) node [vnode=$b$](b){} ++(-0.5*\l,-0.5*\l) node (b2) {};
				\draw (2*\l,0) node [vnode=$c.$](c){};
				\draw (b)-- node[anchor=south, yshift=0.2cm] (r2) {} (c);
				\draw (1.5*\l,-0.3*\l) node[vellipsefirsttwo=0.8*\l]{};
			\end{scope}
			
			\draw[epi] (r1) -- node (R) {} (r2);
			\path (R) ++ (0.5cm, 0) node (R1) {};
			\begin{scope}[shift={(R1)}]
				\draw (0,0) node [vnode=$a$](a){};
				\draw (\l,0) node [vnode=$b$](b){};
				\draw (2*\l,0) node [vnode=$c$](c){};
				\draw (b)--(c);
				\draw (a)--(b);
				\draw (0,-0.3*\l) node[vellipsefirstone=0.8*\l]{};
				\draw (1.5*\l, -0.3*\l) node[vellipsesecondtwo=0.8*\l]{};
			\end{scope}
			
			\draw[mono] (b1) -- node (B) {} (b2);
			\path (B) ++ (-0.5, -1) node (B1) {};
			\begin{scope}[shift={(B1)}]
				\draw (0,0) node [vnode=$b$](a){};
				\draw (\l,0) node [vnode=$c$](b){};
				\draw (0,-0.3*\l) node[vellipsefirstone=0.8*\l]{};
				\draw (\l, -0.3*\l) node[vellipsesecondone=0.8*\l]{};
				\draw (a)--(b);
			\end{scope}
			
		\end{tikzpicture}
	\end{center}

\end{example}

We encourage the reader to develop the pointed stable double category associated $X^T$, where $X$ is a tree.  Several specific examples are given in \cite{trees}.

\end{document}